\newcommand{\deltak}[2]{\mathbbm{1}_{\{##2\}}(##1)}%
\newcommand{\deltak}[2]{\delta_{##2}(##1)}
\title{Greedy regularized kernel interpolation}
\author[1]{G. Santin \thanks{gabriele.santin@mathematik.uni-stuttgart.de, 
orcid.org/0000-0001-6959-1070}}
\author[1]{D. Wittwar \thanks{dominik.wittwar@mathematik.uni-stuttgart.de}}
\author[1]{B. Haasdonk \thanks{bernard.haasdonk@mathematik.uni-stuttgart.de}}
\affil[1]{Institute for Applied Analysis and Numerical Simulation, University of Stuttgart, Germany}
\begin{document}

\maketitle

\begin{abstract}
Kernel based regularized interpolation is a well known technique to approximate a continuous multivariate function using a set of scattered data points 
and the corresponding function evaluations, or data values. This method has some advantage over exact interpolation: one can obtain the same approximation order while 
solving a better conditioned linear system. This method is well suited also for noisy data values, where exact interpolation is not meaningful. Moreover, it allows 
more flexibility in the kernel choice, since approximation problems can be solved also for non strictly positive definite kernels.
We discuss in this paper a greedy algorithm to compute a sparse approximation of the kernel regularized interpolant. This sparsity is a desirable property when the 
approximant is used as a surrogate of an expensive function, since the resulting model is fast to evaluate.
Moreover, we derive convergence results for the approximation scheme, and we prove that a certain greedy selection rule produces asymptotically quasi-optimal 
error rates.
\end{abstract}

\section{Kernels and regularized interpolation}\label{sec:intro}
Our goal is to construct an approximant on an input space $\Omega\subset {\mathbb R}^d$, $d\geq 1$, of an unknown continuous function $f:\Omega\to{\mathbb R}$ provided 
the 
knowledge 
of arbitrary pairwise distinct data points $X_n:=\{x_i\}_{i=1}^n\subset\Omega$, $n\in\N$, and data values $\{f(x_i)\}_{i=1}^n\subset {\mathbb R}$.

The approximant is constructed via kernel interpolation. We recall here the basic facts required for our analysis, while we refer to \cite{Wendland2005} for further 
details. 

On $\Omega$ we consider a positive definite kernel 
$K:\Omega\times\Omega\to 
{\mathbb R}$, i.e., a symmetric function such that for any 
$n\in\N$ and 
any set
$X_n:=\{x_i\}_{i=1}^n\subset \Omega$ of pairwise distinct points the {{kernel matrix}} $A\in{\mathbb R}^{n\times n}$, $A_{ij} := K(x_i, x_j)$, is 
positive 
semidefinite. For a {strictly} positive definite $K$,  $A$ is required to be positive definite.

Associated with the kernel $K$ there is a uniquely defined {{native}} Hilbert space $\calh:=\ns$ of functions $\Omega\to{\mathbb R}$. The space is the unique 
Hilbert space of functions from $\Omega$ to ${\mathbb R}$ where $K$ acts as a reproducing kernel, i.e., $K(\cdot, x)\in\calh$ 
and $(f, K(\cdot, 
x))_{\calh} = f(x)$ for all $x\in\Omega$, $f\in\calh$. 
The elements of this space  are of the form $f:=\sum_{i\in I} \alpha_i K(\cdot, x_i)$ for $I$ a countable set, $\{\alpha_i\}_{i\in I} \subset {\mathbb R}$, and 
$\{x_i\}_{i\in 
I}\subset\Omega$, and for $g:=\sum_{j\in J} \beta_j K(\cdot, y_j)$ it holds 
$
(f, g)_{\calh} = \sum_{i\in I, j \in J}\alpha_i \beta_j K(x_i, y_j).
$
Moreover, if the kernel has smoothness $K\in C^{2\tau}(\Omega\times\Omega)$ with $\tau\geq0$ and $\Omega$ an open set, then it holds that $\ns\subset C^{\tau}(\Omega)$.

One of the main reasons of interest for positive definite kernels is that various approximation problems can be solved in $\calh$ for arbitrary pairwise distinct 
data points $X_n\subset\Omega$ and data values $\{f(x_i)\}_{i=1}^n\subset {\mathbb R}$, $f\in\calh$.
Indeed, one can consider a {loss} and a {regularization} functional $\mathcal L, \mathcal R:\calh\to {\mathbb R}$, and a regularization parameter $\lambda\geq 0$ 
to define an approximant of $f\in\calh$ as  
\begin{equation}\label{eq:representer}
s_n^{\lambda}(f):=s^{\lambda}(f, X_n):=\argmin_{s\in\calh} \mathcal L(s)  + \lambda \mathcal R(s), 
\end{equation}
and to obtain a pointwise reconstruction of $f$ it is common to consider functionals defined as
\begin{equation}\label{eq:functionals}
\mathcal L(s):= \mathcal L(s, f, X_n):=\sum_{i=1}^n \left(f(x_i) - s(x_i)\right)^2,\;\; \mathcal R(s):= \|s\|_{\calh}^2, 
\end{equation}
in which case a solution $s_n^{\lambda}(f)$ of \cref{eq:representer} is called a {{regularized interpolant}} of $f$. Other choices of the functionals lead, for example, 
 to Support Vector Machines and Support Vector Regression (see e.g. \cite{Steinwart2008a}).

This approximation process is well known and characterized for a wide class of functionals by the Representer Theorem (see \cite{Wahba1999}, and \cite{Schoelkopf2001v} 
for a general statement), and for the special case considered here the following holds. 
\begin{theorem}[Representer Theorem for regularized interpolation]\label{th:representer}
If $K$ is positive definite, the problem \cref{eq:representer} with functionals \cref{eq:functionals} admits a solution of the form
\begin{equation}\label{eq:ansatz}
s_n^{\lambda}(f) = \sum_{j=1}^n \alpha_j K(\cdot, x_j), 
\end{equation}
where the vector of coefficients $\alpha\in{\mathbb R}^n$ is the solution of the linear system
\begin{equation}\label{eq:reg_A}
\left(A + \lambda I\right) \alpha = b,\;\; b_i :=f(x_i).
\end{equation}
If $K$ is strictly positive definite, this is the unique solution for all $\lambda\geq 0$.
\end{theorem}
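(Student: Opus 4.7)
The plan is to use the standard orthogonality argument that underlies every Representer Theorem, followed by a direct calculation on the finite-dimensional subspace spanned by the kernel translates. First, I would introduce $V:=\mathrm{span}\{K(\cdot,x_j):j=1,\dots,n\}\subset\calh$ and, for an arbitrary $s\in\calh$, use the orthogonal decomposition $s=s_V+s^\perp$ with $s_V\in V$ and $s^\perp\in V^\perp$. Since each $K(\cdot,x_i)\in V$, the reproducing property gives $s(x_i)=(s,K(\cdot,x_i))_\calh=(s_V,K(\cdot,x_i))_\calh=s_V(x_i)$, so the loss $\mathcal{L}(s)$ depends only on $s_V$. Pythagoras then yields $\|s\|_\calh^2=\|s_V\|_\calh^2+\|s^\perp\|_\calh^2\geq\|s_V\|_\calh^2$, with strict inequality whenever $s^\perp\neq 0$; hence the search for a minimizer can be restricted to $V$ (strictly so when $\lambda>0$).

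Next, I would parametrize $s=\sum_{j=1}^n\alpha_j K(\cdot,x_j)$ and rewrite the objective in terms of $\alpha\in{\mathbb R}^n$. Bilinearity of the inner product together with the reproducing property give $s(x_i)=(A\alpha)_i$ and $\|s\|_\calh^2=\alpha^T A\alpha$, reducing the problem to the convex quadratic minimization
\begin{equation*}
\min_{\alpha\in{\mathbb R}^n}\;J(\alpha):=\|b-A\alpha\|_2^2+\lambda\,\alpha^T A\alpha.
\end{equation*}
Its gradient is $\nabla J(\alpha)=2A\bigl[(A+\lambda I)\alpha-b\bigr]$, so any $\alpha$ solving \cref{eq:reg_A} is a critical point of $J$ and, by convexity, a minimizer. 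Such an $\alpha$ exists whenever $\lambda>0$ (where $A+\lambda I$ is positive definite as the sum of a positive semidefinite matrix and a positive definite one) and also for $\lambda=0$ under strict positive definiteness (where $A$ itself is invertible), producing the representation \cref{eq:ansatz}.

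For the uniqueness assertion under strict positive definiteness, I would combine two facts: strict positive definiteness makes $\{K(\cdot,x_j)\}_{j=1}^n$ linearly independent, so the parametrization $\alpha\mapsto\sum_j\alpha_j K(\cdot,x_j)$ is injective; and the Hessian $2A(A+\lambda I)$ is positive definite whenever $A$ is invertible (as a product of commuting symmetric positive definite matrices), so $J$ is strictly convex and has a unique minimizer in $\alpha$. Together with the orthogonality step above, this forces a unique minimizer in all of $\calh$ for $\lambda>0$. The main subtlety I expect is the boundary case $\lambda=0$: since the regularizer no longer penalizes $s^\perp$, many global minimizers with $s^\perp\neq 0$ coexist in $\calh$, and the uniqueness claim then has to be interpreted as singling out the unique representer of the form \cref{eq:ansatz}, namely the one given by $\alpha=A^{-1}b$.
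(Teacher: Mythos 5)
The paper does not prove this theorem; it cites \cite{Wahba1999} and \cite{Schoelkopf2001v} and treats the result as known, so there is no in-paper proof to compare against. Your argument is the classical Representer Theorem proof (orthogonal decomposition $s=s_V+s^\perp$, reduction of the loss to $V$, Pythagoras on the regularizer, then a finite-dimensional quadratic minimization), which is exactly the route those references take, and the computation is correct: $s(x_i)=(A\alpha)_i$, $\|s\|_{\calh}^2=\alpha^TA\alpha$, $\nabla J(\alpha)=2A[(A+\lambda I)\alpha-b]$, and $J$ is convex because $A(A+\lambda I)$ is positive semidefinite (its eigenvalues are $\sigma_i(\sigma_i+\lambda)\ge0$), so any solution of \cref{eq:reg_A} is a global minimizer.

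Your closing remark about the $\lambda=0$ case is a genuine and correct observation: for strictly positive definite $K$ and $\lambda=0$, the orthogonality step no longer forces $s^\perp=0$, so the minimizer in $\calh$ is far from unique, and the theorem's phrase ``this is the unique solution for all $\lambda\ge0$'' can only mean unique among representers of the form \cref{eq:ansatz} (equivalently, the unique minimal-norm minimizer). Two very minor points you could tighten. First, existence: for merely positive definite $K$ and $\lambda=0$ the theorem as literally stated can fail, since $b$ need not lie in the range of $A$ — but the paper itself concedes this in the paragraph following the theorem, so your implicit restriction to $\lambda>0$ in that case is the right reading. Second, when $K$ is only positive definite and $\lambda>0$, the coefficient vector solving \cref{eq:reg_A} is unique while the critical points of $J$ form the affine set $\alpha_0+\ker A$; these all yield the same function in $V$ because $z\in\ker A$ implies $\|\sum_j z_jK(\cdot,x_j)\|_{\calh}^2=z^TAz=0$, which is worth a line if you want the statement ``admits a solution of the given form'' to pin down $s_n^\lambda(f)$ unambiguously in that regime as well.
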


This approximant is well defined also for positive definite kernels, since the linear system \cref{eq:reg_A} has a unique solution as long as $\lambda>0$. 
This is not the case for pure interpolation, i.e., $\lambda=0$, since the matrix $A$ can be singular in this case.
On the other hand, in the case of strictly positive definite kernels the interpolant $s^0(f)$ always exists and is unique, but it is still  
in general useful to consider a regularized 
interpolant. Indeed, $\lambda$ is a tunable parameter which provides a trade-off between pointwise accuracy, since $s^0(f)$ exactly 
interpolates $f$ on $X_n$, and stability, since the condition number of $A+\lambda I$ is a strictly decreasing function of $\lambda$. Moreover, in several applications 
the data values $\{f(x_i)\}_{i=1}^n$ may be affected by noise, thus it makes no sense to require exact interpolation.

We remark that this kind of approximation can be extended to deal with vector-valued functions $f:\Omega\to{\mathbb R}^q$, $q>1$. In this case an approximant can be 
easily 
obtained by applying the same scheme to each of the $q$ components of $f$, while 
keeping the set $X_n$ fixed across them. The only required modification is to change the right hand side in \cref{eq:reg_A}, which becomes a $n\times q$ matrix with 
$b_i := f(x_i)^T$. The resulting solution $\alpha$ is now also a $n\times q$ matrix, and each of its rows can be used as a coefficient vector in 
\cref{eq:representer} to obtain the desired vector-valued prediction. This construction corresponds to the use of a trivial {matrix-valued kernel}, but more 
sophisticated options are possible (see e.g. \cite{Micchelli2005,WSH17a}). Nevertheless, we consider here only the case $q=1$, while we will analyze the 
general matrix-valued case in full generality in a forthcoming work.

The goal of this paper is to describe an efficient way to compute $s_n^{\lambda}(f)$ for an iteratively increasing set of points $X_n$, which is
adaptively enlarged at each iteration by selecting a new point from a set $\Omega_h\subset\Omega$ in a greedy way. 
This method is a direct extension of the (Vectorial) Kernel Orthogonal Greedy Algorithm ((V)KOGA) \cite{Wirtz2013}, which applies to the case of exact 
interpolation (i.e., 
$\lambda=0$) with strictly positive definite kernels. We will describe this extension and the resulting algorithm in \cref{sec:iterative}, and we will 
consider greedy selection rules of $X_n$ which generalize the $f$-  and $P$-greedy rules for interpolation (\cite{DeMarchi2005, SchWen2000}).

When the points are selected freely inside $\Omega$, i.e., $\Omega_h:=\Omega$, the process is a way to place suitable sampling points $X_n$. If instead 
the selection is made from a large but finite set $\Omega_h:=X_N\subset \Omega$ of given data points or measure locations with $N\gg n$, then $s_n^{\lambda}(f)$ can be 
understood as  a sparse 
approximation of $s_N^{\lambda}(f)$, in the sense that in the sum \cref{eq:ansatz} only the terms corresponding to points in $X_n$ are nonzero (although the coefficients 
are in general not the same). In both cases, a good selection of $X_n$ guarantees that only a small number $n$ of points is sufficient to obtain a good 
accuracy.

The reduction of the number of non-zero terms in the expansion \cref{eq:ansatz} has different computational advantages, and it is mainly interesting in case 
$s_n^{\lambda}(f)$ is used as a surrogate model of an expensive function $f$ in a multi-query scenario (see e.g. \cite{Koeppel2017,KSHH2017,Wirtz2015a}). In this case, 
the time required to obtain the evaluation $s_n^{\lambda}(f)(x)$  for a new input $x\in{\mathbb R}^d$ is a crucial measure of the usability of the surrogate, and it 
clearly 
depends 
on the size of the expansion \cref{eq:ansatz}.

In some notable cases, also convergence rates can be derived for the regularized interpolation process using \textit{sampling inequalities} 
\cite{WendlandRieger2005,Rieger2008b}. They apply to translational invariant kernels such as the Gaussian or the Wendland kernels \cite{Wendland1995a}, and they prove 
that regularized interpolation has the same error rate of interpolation, provided $\lambda$ is chosen small enough, depending on the distribution of the interpolation 
points. We will adapt them to our generalized setting and, after proving some general error bounds in \cref{sec:error}, in \cref{sec:convergence} we will 
show that, in the case of the generalization of $P$-greedy, the results of \cite{SH16b} can be extended to conclude that the greedy selected points provide the same 
convergence rate given by these sampling inequalities for optimally placed points.

\section{Iterative computation and greedy algorithms}\label{sec:iterative}
The general structure of the greedy algorithm is the following. 
We will come back in \cref{sec:greedy_selections} to good criteria to select the next point $x_n$, and for now we 
concentrate on the computation of $s^{\lambda}_n(f)$, $f\in\calh$. We start from the empty set $X_0:=\emptyset$, the zero subspace $V(X_0):=\{0\}$, and the zero 
interpolant $s^{\lambda}_0(f) := 0 \in V(X_0)$. At every iteration $n>0$ we select a new point $x_n\in \Omega_h \setminus X_{n-1}$ and define $X_n:=X_{n-1}\cup\{x_n\}$ 
and $V(X_n):=\Sp{K(\cdot, x_i), x_i\in X_n}$, and then compute $s^{\lambda}_n(f) \in V(X_n)$ by \cref{th:representer} as the regularized interpolant 
with 
data 
points $X_n$ and values $\{f(x_i), x_i\in X_n\}$.

Since $s^{\lambda}_n(f)\in V(X_n)$, for any basis $\{v_k\}_{k=1}^n$ of $V(X_n)$ we can write $s^{\lambda}_n(f)
= \sum_{k=1}^n c_k v_k$ for suitable coefficients $\{c_k\}_{k=1}^n$. To have an efficient computation of $s_n^{\lambda}(f)$ and to avoid recomputing already computed 
quantities, we should employ a {nested} basis, i.e., $\Sp{v_k}_{k=1}^n = V(X_n)$ for all $n$, and have that the coefficients $\{c_k\}_{k=1}^{n-1}$ do 
not change at step $n$.

In the case of non regularized interpolation with a strictly positive definite kernel $K$, the basis satisfying these properties is the Newton basis of \cite{Muller2009, 
Pazouki2011}, which can be obtained by a Gram-Schmidt orthonormalization of $\{K(\cdot, x_i), x_i\in X_n\}$ in $\calh$, and for which it holds 
\begin{equation}\label{eq:basis_definition}
v_k:=\sum_{j=1}^n \beta_{jk}K(\cdot, x_j), \; 1\leq k\leq n,
\end{equation}
with a matrix of coefficients $C_v:=[\beta_{jk}]_{j,k=1}^n = L^{-T}$, where $A = LL^T$ is the Cholesky factorization of the kernel matrix $A$. This basis can be easily 
updated when adding a new point, since the leading principal submatrix of $L$ is the Cholesky factor of the corresponding leading principal submatrix of $A$. The 
resulting VKOGA algorithm uses this basis, suitable selection rules for the new point, and the extension to vector-valued functions outlined in Section 
\cref{sec:intro}

To extend this construction to the case of regularized interpolation for possibly non strictly positive definite kernels, from \cref{th:representer} we 
see that the regularized interpolant is defined by coefficients which solve a linear system with matrix $A + \lambda I$. This matrix is in fact the kernel 
matrix of the kernel $K_{\lambda}(x, y):=K(x,y) + \lambda \deltak{x}{y}$ on the points $X_n$, where $\deltak{x}{y}$ is the indicator function of the set 
$\{x\}$, which is clearly symmetric and it is indeed strictly positive 
definite for $\lambda>0$. In the following proposition we 
prove this fact and some related properties of the corresponding native spaces.

\begin{proposition}\label{prop:hl}
Let $\Omega\subset{\mathbb R}^d$ have non empty interior, $K\in C(\Omega\times\Omega)$ be a positive definite kernel on $\Omega$, and $\lambda> 0$.
Then 
\begin{enumerate}[i)]
\item\label{item1} $K'(x, y):= \lambda \deltak{x}{y}$ and $K_{\lambda}(x, y):= K(x, y) + K'(x, y)$ are strictly positive definite kernels on $\Omega$.
\item\label{item2} The native spaces are related by $\nsl = \calh_{K}(\Omega) \oplus \calh_{K'}(\Omega)$.
\item\label{item3} For all $f\in\nsl$ there exist unique $g\in\ns$, $h\in \calh_{K'}(\Omega)$ such that $f=g+h$, and it holds $\|f\|_{\nsl}^2 = 
\|g\|_{\ns}^2+\|h\|_{\calh_{K'}(\Omega)}^2$.
\end{enumerate}
\end{proposition}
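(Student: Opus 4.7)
The plan is to handle \cref{item1} by an elementary linear algebra argument and to obtain \cref{item2} and \cref{item3} as an application of Aronszajn's sum-of-kernels theorem, with the main work concentrated in showing that the sum of native spaces is direct.

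For \cref{item1}, the kernel $K'$ is symmetric by inspection, and for any pairwise distinct points $x_1,\ldots,x_n\in\Omega$ its kernel matrix equals $\lambda I$, which is strictly positive definite. The kernel matrix of $K_\lambda$ at such points is $A+\lambda I$, where $A$ is the positive semidefinite kernel matrix of $K$, so all its eigenvalues are at least $\lambda>0$, and $K_\lambda$ is strictly positive definite.

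For \cref{item2} and \cref{item3}, I would invoke Aronszajn's theorem on sums of reproducing kernels applied to $K_\lambda = K + K'$. It immediately yields the set-theoretic identity $\nsl = \ns + \calh_{K'}(\Omega)$ together with the variational norm
\begin{equation*}
\|f\|_{\nsl}^2 = \inf\bigl\{\|g\|_{\ns}^2 + \|h\|_{\calh_{K'}(\Omega)}^2 : f = g+h,\ g\in\ns,\ h\in\calh_{K'}(\Omega)\bigr\}.
\end{equation*}
Both the direct sum in \cref{item2} and the Pythagorean identity together with uniqueness in \cref{item3} then reduce to the single claim $\ns\cap\calh_{K'}(\Omega)=\{0\}$: once the decomposition is unique, the infimum above is trivially attained at it.

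To establish this triviality of the intersection, I would combine two observations. First, the reproducing identity $(K'(\cdot,x),K'(\cdot,y))_{\calh_{K'}} = K'(x,y)$ shows that $\{\lambda^{-1/2}K'(\cdot,x)\}_{x\in\Omega}$ is an orthonormal system whose closed linear span is $\calh_{K'}(\Omega)$, so by Bessel's inequality every $h\in\calh_{K'}(\Omega)$ has at most countable support in $\Omega$. Second, the reproducing property of $K$ together with Cauchy--Schwarz gives
\begin{equation*}
|f(x)-f(y)|^2 \leq \|f\|_{\ns}^2 \bigl(K(x,x) - 2K(x,y) + K(y,y)\bigr)
\end{equation*}
for every $f\in\ns$ and $x,y\in\Omega$, and the right-hand side is continuous in $(x,y)$ by $K\in C(\Omega\times\Omega)$, so every $f\in\ns$ is continuous on $\Omega$. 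A continuous function on $\Omega$ whose support lies in a countable set $S$ must vanish on any open ball contained in the non-empty interior of $\Omega$, since such a ball is uncountable and $\Omega\setminus S$ is therefore dense in it; continuity then forces $f\equiv 0$.

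The main obstacle I anticipate is exactly this support argument: one must leverage both the continuity inherited from the continuity of $K$ and the hypothesis that $\Omega$ has non-empty interior in order to exclude non-trivial continuous functions supported on a countable set. Everything else is essentially bookkeeping around Aronszajn's theorem and the trivial spectral estimate on $A+\lambda I$.
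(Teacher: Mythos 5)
Your proposal is correct and follows essentially the same route as the paper: item i) by noting the kernel matrices are $\lambda I$ and $A+\lambda I$, and items ii) and iii) by Aronszajn's sum-of-kernels theorem reduced to the triviality of $\ns\cap\calh_{K'}(\Omega)$, which both you and the paper establish by observing that elements of $\calh_{K'}(\Omega)$ have countable support while elements of $\ns$ are continuous on a set with uncountable interior. The only cosmetic differences are that you justify the countable-support property via Bessel's inequality rather than by describing native-space elements as countable sums, and you spell out the continuity of $\ns$-functions explicitly.
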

\begin{proof}
For any set $X_n\subset\Omega$ the kernel matrix of $K'$ is the scaled identity matrix $\lambda\cdot I$, so $K'$ is clearly positive definite and strictly 
positive definite if $\lambda>0$. Its native space consists of functions $f(x):=\sum_{i\in I} \alpha_i K'(x, x_i) = \lambda \sum_{i\in I} \alpha_i \delta_{x_i}(x)$ for a 
countable set $I$, 
$\{x_i\}_{i\in I}\subset\Omega$, and $\{\alpha_i\}_{i\in I}\subset {\mathbb R}$. In particular, since $\{x_i\}_{i\in I}$ is countable and $\Omega$ is more than 
countable, 
$\calh_{K'}(\Omega)$ contains no continuous functions except for $f:=0$.

Since $K$, $K'$ are positive definite, according to \cite[Section 6]{Aronszajn1950} also $K_{\lambda}$ is positive definite, and it is strictly positive definite if at 
least one between $K$ and $K'$ is strictly positive definite, so in particular for $\lambda>0$. 

Since $K\in C(\Omega\times\Omega)$ it follows that $\ns\subset C(\Omega)$ and thus $\ns \cap \calh_{K'}(\Omega) = \{0\}$. Then again \cite[Section 
6]{Aronszajn1950} guarantees that \cref{item2} and \cref{item3} hold.

\end{proof}
For simplicity we use from now on the notation $\calh:=\ns$ and $\calhl:=\nsl$.
From the last proposition, \cref{item1}, it follows that for $\lambda>0$, $X_n\subset\Omega$, and $f\in\calhl$,  the 
$\calhl$-interpolant of $f$ on $X_n$ is well defined. 
We denote it as 
$$
I_n^{\lambda}(f) = \sum_{j=1}^n \alpha_j K_{\lambda}(\cdot, x_j),
$$
where $(A + \lambda I)\alpha = b$. Using \cref{item3} of \cref{prop:hl} and the definition of $s_n^{\lambda}(f)$, we have a unique decomposition of 
$I_n^{\lambda}(f)$, which needs to satisfy
\begin{align*}
I_n^{\lambda}(f)(x) &= \sum_{j=1}^n \alpha_j K_{\lambda}(x, x_j)  = \sum_{j=1}^n \alpha_j K(x, x_j) + \sum_{j=1}^n \alpha_j \lambda \deltak{x}{x_j}\\
&= s_n^{\lambda}(f)(x) + \sum_{j=1}^n \alpha_j \lambda \deltak{x}{x_j},\;\; x\in\Omega.
\end{align*}
Observe that this construction implies that the regularized interpolant is well defined also for $f\in\calhl$. Moreover, for all $f\in\calhl$ we get $I_n^{\lambda}(f)(x) 
= s_n^{\lambda}(f)(x)$ 
if $x\notin X_n$.

The same decomposition remains valid if $I_n^{\lambda}(f)$ is expressed in terms of the Newton basis of $X_n$ in $\calhl$, which we denote as 
$\{v_k^{\lambda}\}_{k=1}^n$, 
and which is defined, analogous to \cref{eq:basis_definition}, by coefficients 
\begin{equation}\label{eq:change_of_basis}
C_v := L^{-T},\;\;A + \lambda I = L\ L^{T}. 
\end{equation}
Once again, we recall that 
this basis exists since $K^{\lambda}$ is strictly 
positive definite by \cref{item1} of \cref{prop:hl}. We recall that the interpolant $I_n^{\lambda}(f)$ is the orthogonal projection of 
$f\in\calhl$ into $\Sp{K_{\lambda}(\cdot, x_i), x_i\in X_n}$, and since the basis is orthonormal it holds
\begin{equation}\label{eq:interp_in_calhl}
 I_n^{\lambda}(f) = \sum_{k=1}^n (f, v_k^{\lamdba})_{\calhl} v_k^{\lamdba}. 
\end{equation}
Morever, as elements of $\calhl$,  also the functions $v_k^{\lamdba}$ have a unique decomposition, 
which is 
\begin{align}\label{eq:v_k_l}
v_k^{\lambda}(x):= \sum_{j=1}^n \beta_{jk} K^{\lambda}(x, x_j) = \sum_{j=1}^n \beta_{jk} K(x, x_j) + \lambda \sum_{j=1}^n \beta_{jk} \deltak{x}{x_j},\;\; x\in\Omega..
\end{align}
For every $1\leq k\leq n$, we denote as $v_k$ the first term in the right hand side, and as in the case of the interpolant above we have $v_k^{\lambda}(x) = 
v_k(x)$ if $x\notin X_n$. The elements $\{v_k\}_{k=1}^n$ are clearly in $V(X_n)$. If $K$ is strictly positive definite they are linearly independent since the 
matrix $C_v$ from \cref{eq:change_of_basis} is invertible, so they are a basis, while they are at least a generating set for 
$V(X_n)$ if $K$ is only positive definite. This set of functions is what we need to have the efficient update of the regularized interpolant.

\begin{proposition}\label{prop:newton}
Let $X_{n}:=X_{n-1}\cup\{x_n\}\subset \Omega$ and $f\in \calhl$. Then it holds
\begin{equation}\label{eq:reg_int1}
s_n^{\lambda}(f) = \sum_{k=1}^n (f, v_k^{\lambda})_{\calhl} v_k = s_{n-1}^{\lambda}(f) + (f, v_n^{\lambda})_{\calhl} v_n.
\end{equation}
Moreover, if $f\in\calh$ it holds $(f, v_k)_{\calh} = (f, v_k^{\lambda})_{\calhl}$ so
\begin{equation}\label{eq:reg_int2}
s_n^{\lambda}(f) = \sum_{k=1}^n (f, v_k)_{\calh} v_k = s_{n-1}^{\lambda}(f) + (f, v_n)_{\calh} v_n.
\end{equation}
\end{proposition}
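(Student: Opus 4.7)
The plan is to reduce everything to the orthonormal expansion \eqref{eq:interp_in_calhl} for the full $\calhl$-interpolant $I_n^{\lambda}(f)$ and then project onto $\calh$ via the orthogonal decomposition given by \cref{prop:hl}.

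First I would recall from \eqref{eq:interp_in_calhl} that $I_n^{\lambda}(f) = \sum_{k=1}^n (f, v_k^{\lambda})_{\calhl} v_k^{\lambda}$, and use the decomposition \eqref{eq:v_k_l}, which writes each $v_k^{\lambda}$ as $v_k + h_k$ with $v_k \in \ns$ and $h_k := \lambda \sum_j \beta_{jk}\deltak{\cdot}{x_j}\in \calh_{K'}$. Substituting gives a decomposition of $I_n^{\lambda}(f)$ as a sum of an element of $\ns$ and an element of $\calh_{K'}$. Comparing with the decomposition $I_n^{\lambda}(f) = s_n^{\lambda}(f) + \lambda\sum_j \alpha_j \deltak{\cdot}{x_j}$ already derived in the paragraph before the proposition, and invoking the uniqueness part of \cref{prop:hl}\,\cref{item3}, I get $s_n^{\lambda}(f) = \sum_{k=1}^n (f, v_k^{\lambda})_{\calhl} v_k$, which is the first identity.

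For the incremental update, the key is the Cholesky structure in \eqref{eq:change_of_basis}: since the leading principal $(n-1)\times(n-1)$ submatrix of $L$ is the Cholesky factor of the corresponding submatrix of $A+\lambda I$, the first $n-1$ functions $\{v_k^{\lambda}\}_{k=1}^{n-1}$ form exactly the Newton basis of $\calhl$ associated with $X_{n-1}$, and analogously for $\{v_k\}_{k=1}^{n-1}$. Applying the identity just proved at step $n-1$ gives $s_{n-1}^{\lambda}(f) = \sum_{k=1}^{n-1} (f, v_k^{\lambda})_{\calhl} v_k$, so subtracting yields the stated one-term update.

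To finish, for $f\in\calh$ I would exploit that by \cref{prop:hl}\,\cref{item2}--\cref{item3} the sum $\calhl = \calh \oplus \calh_{K'}$ is orthogonal in $\calhl$: indeed $\|g+h\|_{\calhl}^2 = \|g\|_{\ns}^2 + \|h\|_{\calh_{K'}}^2$ forces $(g,h)_{\calhl}=0$ and moreover $(g_1, g_2)_{\calhl} = (g_1, g_2)_{\calh}$ for $g_1, g_2 \in \calh$. Applied to $v_k^{\lambda} = v_k + h_k$ this gives $(f, v_k^{\lambda})_{\calhl} = (f, v_k)_{\calh}$, and the second display follows.

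The main obstacle is conceptual rather than computational: one must juggle three reproducing kernel Hilbert spaces ($\calh$, $\calh_{K'}$, $\calhl$) and repeatedly use that their direct-sum decomposition is orthogonal, so that computing inner products in $\calhl$ of functions that happen to live in $\calh$ collapses back to the original $\calh$ inner product. Once this orthogonality is set up, every identity reduces to uniqueness of decomposition plus the standard Newton-basis/Cholesky nestedness argument.
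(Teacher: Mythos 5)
Your proof is correct, but it takes a genuinely different route from the paper's for two of the three claims. For the first identity in \cref{eq:reg_int1}, the paper computes directly: it expands $\sum_k (f, v_k^{\lambda})_{\calhl} v_k$ in terms of the coefficients $\beta_{jk}=(L^{-T})_{jk}$ and verifies by the Cholesky identity $\sum_k (L^{-T})_{ik}(L^{-T})_{jk} = ((A+\lambda I)^{-1})_{ji}$ that the result is exactly the representer-theorem solution $\sum_j \alpha_j K(\cdot,x_j)$. You instead argue structurally: expand $I_n^{\lambda}(f)$ in the orthonormal basis, split each $v_k^{\lambda}=v_k+h_k$ along $\calh\oplus\calh_{K'}$, and invoke uniqueness of the decomposition from \cref{prop:hl}\,\cref{item3} to identify the $\calh$-component with $s_n^{\lambda}(f)$. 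This is valid (and arguably cleaner), since the decomposition $I_n^{\lambda}(f)=s_n^{\lambda}(f)+\lambda\sum_j\alpha_j\deltak{\cdot}{x_j}$ established before the proposition depends only on \cref{th:representer} and not on the proposition itself, so there is no circularity; the price is that you lean on the Aronszajn direct-sum machinery where the paper only needs linear algebra. For the identity $(f,v_k)_{\calh}=(f,v_k^{\lambda})_{\calhl}$ the paper again argues pointwise via the reproducing property, $(f,K(\cdot,x))_{\calh}=f(x)=(f,K_{\lambda}(\cdot,x))_{\calhl}$, and linearity, whereas you derive orthogonality of the sum $\calhl=\calh\oplus\calh_{K'}$ from the Pythagorean norm identity by polarization and kill the $h_k$-term; both are sound, and your version makes explicit a fact (that the $\calhl$ inner product restricted to $\calh$ is the $\calh$ inner product) that the paper only uses implicitly. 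The nestedness argument for the one-term update is essentially identical in both proofs.
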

\begin{proof}
For the first equality in \cref{eq:reg_int1} we just need to prove that $\sum_{k=1}^n (f, v_k^{\lambda})_{\calhl} v_k$ equals \cref{eq:ansatz} with $\alpha$ that 
satisfies \cref{eq:reg_A}. This holds since
\begin{align*}
\sum_{k=1}^n (f, v_k^{\lambda})_{\calhl} v_k &= \sum_{k=1}^n \sum_{i=1}^n \beta_{ik} \left(f, K_{\lambda}(\cdot, x_i) \right)_{\calhl} \sum_{j=1}^n \beta_{jk} K(\cdot, 
x_j)\\
&=\sum_{j=1}^n  \left(\sum_{i=1}^n \sum_{k=1}^n  \beta_{ik}
\beta_{jk} f(x_i)\right)  K(\cdot, x_j),
\end{align*}
and $\beta_{ik} = (C_v)_{ik} = (L^{-T})_{ik}$, thus
\begin{align*}
\sum_{i=1}^n \sum_{k=1}^n  \beta_{ik} \beta_{jk}f(x_i)  &=  \sum_{i=1}^n \left(\sum_{k=1}^n  (L^{-T})_{ik}
(L^{-T})_{jk}\right) f(x_i)  \\
& = \sum_{i=1}^n \left((A+\lambda I)^{-1}\right)_{ji} f(x_i)  = \alpha_j.
\end{align*}
The second equality holds because the basis is nested since $v_n^{\lambda}$ depends only on the 
points $X_n$, being $C_v$ upper triangular.
Moreover, if $f\in\calh\subset\calhl$ it holds $(f, K(\cdot, x))_{\calh} = (f, K_{\lambda}(\cdot, x))_{\calhl}$ since both are the reproducing kernel of the 
corresponding space, so in particular $(f, v_k)_{\calh} = (f, 
v_k^{\lambda})_{\calhl}$ holds by linearity and \cref{eq:reg_int2} follows.
\end{proof}

In the case of non regularized interpolation it is also possible to define a residual as the difference between the target function $f$ and the current interpolant. It 
can be used to decide what point to select in the $f$-greedy variant of VKOGA, and it also has an efficient update formula. 

The same can be obtained for regularized interpolation as follows. Observe that both the residual and the update rule are nothing but the ones of the interpolant 
$I_n^{\lambda}(f)$, which are already known to satisfy the desired properties. We prove the statement only for completeness.
\begin{proposition}\label{prop:residual}
Let $f\in\calhl$ and define the residual $r_n\in\calhl$ as 
\begin{equation}\label{eq:update}
r_0:=f,\;\; r_n:= f - \sum_{k=1}^n c_k v_k^{\lambda},\ n\geq 1.
\end{equation}
Then we have $r_n(x_k) = 0$ for $1\leq k\leq n$ and 
\begin{equation}\label{eq:reg_int_res}
s^{\lambda}_n(f) = \sum_{k=1}^n (r_{k-1}, v_k^{\lambda})_{\calhl} v_k. 
\end{equation}
\end{proposition}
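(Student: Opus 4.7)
The plan is to reduce everything to properties of the $\calhl$-interpolant $I_n^{\lambda}(f)$ that have already been established, and to exploit the orthonormality of the Newton basis $\{v_k^{\lambda}\}_{k=1}^n$ in $\calhl$.

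First I would identify $r_n$ as the usual interpolation residual. The coefficients $c_k$ in \cref{eq:update} are only implicit, but the natural choice (and, given what follows, the only consistent one) is $c_k = (f, v_k^{\lambda})_{\calhl}$. With this choice, \cref{eq:interp_in_calhl} gives $r_n = f - I_n^{\lambda}(f)$. To obtain the first claim, I would then verify that $I_n^{\lambda}(f)$ interpolates $f$ on $X_n$: writing $I_n^{\lambda}(f) = \sum_{j=1}^n \alpha_j K_{\lambda}(\cdot, x_j)$ with $(A+\lambda I)\alpha = b$, a direct computation gives
\begin{equation*}
I_n^{\lambda}(f)(x_i) = \sum_{j=1}^n \alpha_j \bigl(K(x_i,x_j) + \lambda\,\deltak{x_i}{x_j}\bigr) = \bigl((A+\lambda I)\alpha\bigr)_i = f(x_i),
\end{equation*}
so $r_n(x_k)=0$ for $1\le k\le n$.

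For the second claim I would use the orthonormality of the Newton basis, which follows from its definition via the Cholesky factorization in \cref{eq:change_of_basis}. Since $v_j^{\lambda}\perp v_k^{\lambda}$ in $\calhl$ for $j<k$, the telescoping definition gives
\begin{equation*}
(r_{k-1}, v_k^{\lambda})_{\calhl} = \Bigl(f - \sum_{j=1}^{k-1} c_j v_j^{\lambda},\, v_k^{\lambda}\Bigr)_{\calhl} = (f, v_k^{\lambda})_{\calhl}.
\end{equation*}
Combining this with the identity $s_n^{\lambda}(f) = \sum_{k=1}^n (f, v_k^{\lambda})_{\calhl} v_k$ from \cref{eq:reg_int1} of \cref{prop:newton} yields \cref{eq:reg_int_res}.

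There is essentially no serious obstacle: the first part only needs one line of matrix algebra to verify that the $\calhl$-interpolant actually interpolates on $X_n$, and the second part is a one-line orthogonality argument combined with an appeal to \cref{prop:newton}. The only mild point to be careful about is to fix the unspecified coefficients $c_k$ in \cref{eq:update} consistently with \cref{eq:interp_in_calhl}, and this is exactly why the authors note that the statement is really just a restatement of known properties of $I_n^{\lambda}(f)$.
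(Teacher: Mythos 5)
Your proposal is correct and follows essentially the same route as the paper: fix $c_k = (f, v_k^{\lambda})_{\calhl}$ so that $r_n = f - I_n^{\lambda}(f)$ and use the interpolation property of $I_n^{\lambda}(f)$ on $X_n$ for the first claim, then use the $\calhl$-orthonormality of $\{v_k^{\lambda}\}$ to reduce $(r_{k-1}, v_k^{\lambda})_{\calhl}$ to $(f, v_k^{\lambda})_{\calhl}$ and invoke \cref{eq:reg_int1}. Your explicit one-line matrix verification that $I_n^{\lambda}(f)(x_i) = f(x_i)$ is a harmless addition that the paper leaves implicit.
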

\begin{proof}
Since $\{v_k^{\lambda}\}_k$ is $\calh^{\lambda}$-orthonormal, we obtain
\begin{align*}
(r_{k-1}, v_k^{\lambda})_{\calh^{\lambda}} &= \left(f - \sum_{j=1}^{k-1} c_j v_j^{\lambda}, v_k^{\lambda}\right)_{\calh^{\lambda}}= 
\left(f,v_k^{\lambda}\right)_{\calh^{\lambda}},
\end{align*}
and thus \cref{eq:reg_int_res} equals \cref{eq:reg_int1}.
Moreover, by the form \cref{eq:interp_in_calhl} of the interpolant $I_n^{\lambda}(f)$ we obtain
\begin{align*}
r_n(x_k) &= f(x_k) - \sum_{k=1}^n c_k v_k^{\lambda}(x_k) = f(x_k) - \sum_{k=1}^n (f, v_k^{\lambda})_{\calh^{\lambda}} v_k^{\lambda}(x_k)\\
&= f(x_k) - 
I_n^{\lambda}(f)(x_k) = 0.
\end{align*}
\end{proof}

\section{Approximation schemes and error estimation}\label{sec:error}

We recall that a standard way to measure the pointwise interpolation error is via the {power function}. Although we do not consider the case here, we remark that it 
would be possible to have a proper definition also for positive definite kernels. Indeed, it would be sufficient to solve the linear system defining the interpolant 
using the pseudo-inverse of the kernel matrix.

Instead in the following, whenever we mention $s_n^0$ or its power function $P_n$ we implicitly assume that $K$ is strictly positive definite so that both 
objects are well defined, which is instead always the case for the interpolant $I_n^{\lambda}$ if $\lambda>0$. 

For the interpolants $s_n^0$ and $I_n^{\lambda}$ the power function can be defined and computed as 
\begin{align}\label{eq:power_functions}
P_n(x)&:=\sup\limits_{\stackrel{f\in\calh}{f\neq 0}}\frac{\left|f(x) - s_n^0(f)(x)\right|}{\|f\|_{\calh}}= \left\|K(\cdot, x) - s_n^0(K(\cdot, x))\right\|_{\calh}\\
\nonumber Q_n^{\lambda}(x)&:=\sup\limits_{\stackrel{f\in\calhl}{f\neq 0}}\frac{\left|f(x) - I_n^{\lambda}(f)(x)\right|}{\|f\|_{\calhl}}= \left\|K_{\lambda}(\cdot, x) - 
I_n^{\lambda}(K_{\lambda}(\cdot, x))\right\|_{\calhl},
\end{align}
and in both cases from the definition we obtain pointwise error bounds of the form
\begin{align}\label{eq:power_bound}
|f(x) - s_n^0(f)(x)|&\leq P_n(x) \|f\|_{\calh},\;\; x\in\Omega,\;f\in \calh\\
|f(x) - I_n^{\lamdba}(f)(x)|&\leq Q_n^{\lambda}(x) \|f\|_{\calhl},\;\; x\in\Omega,\;f\in \calhl \nonumber,
\end{align}
where the bounds can not be improved for a fixed $x\in\Omega$, if they have to hold for all $f\in\calh$ or $f\in\calhl$. 

To obtain the same kind of error bound as in \cref{eq:power_bound}, we define the power function of regularized interpolation as
\begin{equation}\label{eq:Qn}
P_n^{\lambda}(x):=\sup\limits_{\stackrel{f\in\calh}{f\neq 0}}\frac{\left|f(x) - 
s_n^{\lambda}(f)(x)\right|}{\|f\|_{\calh}},
\end{equation}
which immediately gives
\begin{align*}
|f(x) - s_n^{\lambda}(f)(x)|&\leq P_n^{\lambda}(x) \|f\|_{\calh},\;\; x\in\Omega,\;f\in \calh.
\end{align*}
By this definition it holds indeed $P_n^{\lambda} = P_n$ if $\lambda = 0$, and we have the following result.

\begin{proposition}\label{prop:Qn}
For all $x\in\Omega$ we have
\begin{align}\label{eq:reg_power}
P_n^{\lambda}(x)^2 &= \left\|K(\cdot, x) - s_n^{\lambda}(K(\cdot, x))\right\|_{\calh}^2\\
\nonumber&= K(x, x) - 2 \sum_{k=1}^n v_k(x)^2 +  \sum_{l, k=1}^n v_k(x)v_l(x)(v_k, v_l)_{\calh}. 
\end{align}
\end{proposition}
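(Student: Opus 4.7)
The plan is to reduce $P_n^\lambda(x)$ to the norm of a representer in $\calh$, and then expand that norm using the reproducing property and the expression of $s_n^\lambda$ in the Newton basis given by Proposition~\ref{prop:newton}.

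First I would rewrite the pointwise error $f(x)-s_n^\lambda(f)(x)$ as the inner product against a fixed element of $\calh$. For $f\in\calh$ the reproducing property gives $f(x)=(f,K(\cdot,x))_{\calh}$, and the second identity of Proposition~\ref{prop:newton} yields $s_n^\lambda(f)=\sum_{k=1}^n (f,v_k)_{\calh}\,v_k$, so by evaluation at $x$ and linearity of the inner product,
\begin{equation*}
f(x)-s_n^\lambda(f)(x) = \Bigl(f,\,K(\cdot,x)-\sum_{k=1}^n v_k(x)\,v_k\Bigr)_{\calh}.
\end{equation*}
Now the element $\sum_{k=1}^n v_k(x)\,v_k$ is precisely $s_n^\lambda(K(\cdot,x))$, since applying Proposition~\ref{prop:newton} to $g:=K(\cdot,x)\in\calh$ produces exactly this expansion using $(K(\cdot,x),v_k)_{\calh}=v_k(x)$. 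Cauchy--Schwarz then gives $|f(x)-s_n^\lambda(f)(x)|\le\|f\|_{\calh}\,\|K(\cdot,x)-s_n^\lambda(K(\cdot,x))\|_{\calh}$, with equality attained by $f:=K(\cdot,x)-s_n^\lambda(K(\cdot,x))$. Taking the supremum in the definition \cref{eq:Qn} yields the first equality of \cref{eq:reg_power}.

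For the second equality, I would simply expand the squared norm by bilinearity, using $\|K(\cdot,x)\|_{\calh}^2=K(x,x)$ and the explicit form $s_n^\lambda(K(\cdot,x))=\sum_{k=1}^n v_k(x)\,v_k$:
\begin{align*}
\|K(\cdot,x)-s_n^\lambda(K(\cdot,x))\|_{\calh}^2
&= K(x,x) - 2\sum_{k=1}^n v_k(x)\,(K(\cdot,x),v_k)_{\calh} \\
&\quad + \sum_{k,l=1}^n v_k(x)v_l(x)\,(v_k,v_l)_{\calh},
\end{align*}
and the reproducing property collapses $(K(\cdot,x),v_k)_{\calh}$ to $v_k(x)$, giving the claimed formula.

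The only conceptual subtlety, and the single place where the regularized case genuinely differs from pure interpolation, is that the functions $\{v_k\}_{k=1}^n$ are orthonormal in $\calhl$ but \emph{not} in $\calh$, so the cross-term Gram matrix $(v_k,v_l)_{\calh}$ does not reduce to the identity and the double sum cannot be simplified further. Once one recognises that the key identity $(f,v_k)_{\calh}=(f,v_k^\lambda)_{\calhl}$ from Proposition~\ref{prop:newton} lets us treat $s_n^\lambda$ as if it were an orthogonal projection in $\calh$ \emph{only} when testing against elements of $\calh$, the rest is a bookkeeping exercise and no additional obstacle arises.
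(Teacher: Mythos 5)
Your proof is correct and follows essentially the same path as the paper's: both reduce $P_n^\lambda(x)$ to $\|K(\cdot,x)-s_n^\lambda(K(\cdot,x))\|_{\calh}$ by expressing the pointwise error as an inner product against a representer built from the Newton-type basis, and then expand that norm bilinearly. The only cosmetic difference is that you invoke Cauchy--Schwarz with its equality case directly, by explicitly recognizing $\sum_k v_k(x)\,v_k = s_n^\lambda(K(\cdot,x))$, whereas the paper first records the self-adjointness identity $(f,s_n^\lambda(g))_{\calh}=(s_n^\lambda(f),g)_{\calh}$ and then verifies the extremality of $f_x:=K(\cdot,x)-s_n^\lambda(K(\cdot,x))$ by a short direct computation of $\|f_x\|_{\calh}^2$ and $(f_x-s_n^\lambda(f_x))(x)$; these are two phrasings of the same argument.
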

\begin{proof}
First observe that, using \cref{eq:reg_int2}, for all $f, g\in\calh$ it holds
$$
\left(f, s_n^{\lamdba}(g)\right)_{\calh} = \left(s_n^{\lamdba}(f), g\right)_{\calh}.
$$
To simplify the notation we define $v_x:=K(\cdot, x)$. For any $x \in \Omega$ and $f\in\calh$  we have
\begin{align*}
\left|\left(f - s_n^{\lambda}(f) \right)(x) \right| &= \left|(v_x, f)_{\calh} - (v_x, s_n^{\lambda}(f))_{\calh}\right|=\left|(v_x, f)_{\calh} - 
(s_n^{\lambda}(v_x), f)_{\calh}\right|\\
& = \left|(v_x - s_n^{\lambda}(v_x), f)_{\calh}\right| \leq \left\|v_x - s_n^{\lambda}(v_x)\right\|_{\calh}  \|f\|_{\calh}, 
\end{align*}
so from \cref{eq:Qn} it follows that $P_n^{\lambda }(x) \leq \left\|v_x - s_n^{\lambda}(v_x)\right\|_{\calh}$.

The equality is reached by taking $f:=f_{x} : = v_x - s_n^{\lambda}(v_x)$. Indeed, the norm of $f_x$ is
\begin{align*}
\|f_x\|_{\calh}^2 &= \left\|v_x - s_n^{\lambda}(v_x)\right\|_{\calh}^2 = (v_x, v_x)_{\calh} - 2 (v_x, s_n^{\lambda}(v_x))_{\calh} + (s_n^{\lambda}(v_x), 
s_n^{\lambda}(v_x))_{\calh} \\
& = (v_x, v_x)_{\calh} - 2 (v_x, s_n^{\lambda}(v_x))_{\calh} + (v_x,s_n^{\lambda}(s_n^{\lambda}(v_x)))_{\calh}\\
& = v_x(x) - 2 s_n^{\lambda}(v_x)(x) + s_n^{\lambda}(s_n^{\lambda}(v_x))(x), 
\end{align*}
and by linearity of $s_n^{\lambda}$ we obtain
\begin{align*}
(f_x - s_n^{\lambda}(f_x))(x) &= v_x(x) - s_n^{\lambda}(v_x)(x) - s_n^{\lambda}(v_x)(x) + s_n^{\lambda}(s_n^{\lambda}(v_x))(x) \\
&= v_x(x) - 2 s_n^{\lambda}(v_x)(x)  +  s_n^{\lambda}(s_n^{\lambda}(v_x))(x) =\|f_x\|_{\calh}^2,
\end{align*}
thus ${|f_x(x) - s_n^{\lambda}(f_x)(x)|}/{\|f_x\|_{\calh}} = \|f_x\|_{\calh}=\left\|v_x - s_n^{\lambda}(v_x)\right\|_{\calh}.$

Using the form \cref{eq:reg_int2} of the regularized interpolant, the second equality easily follows. Observe that the terms can not be simplified for $\lambda>0$ 
because the basis 
$\{v_k\}_{k=1}^n$ is not orthogonal in $\calh$.
\end{proof}

Thanks to \cref{eq:power_bound}, upper bounds on the power function give upper 
bounds on the pointwise approximation error achieved by the corresponding approximation scheme. 
The $P$-greedy \cite{DeMarchi2005}  variant of VKOGA uses precisely this idea and selects at each iteration the new point $x_n$ that maximizes $P_n(x)$ over 
$\Omega_h\setminus X_{n-1}$. It can be proven that this selection strategy produces approximants that have a quasi-optimal convergence rate, i.e., $n$ greedily 
selected points  provide, up to a different constant, the same convergence order of $n$ optimally placed ones (see \cite{SH16b}). We would like to achieve the 
same result here by defining  
a suitable $P$-greedy selection rule and prove optimality of the corresponding interpolant. In the case of interpolation, the actual use of this selection 
criterion is possible because also the power function has an efficient 
update rule. For example, in the case of $Q_n^{\lambda}$, for the Newton basis $\{v_k^{\lambda}\}_{k=1}^n$ it holds
\begin{equation}\label{eq:power_update}
Q_n^{\lambda}(x)^2=K_{\lambda}(x, x) - \sum_{k=1}^n v_k^{\lambda}(x)^2 =  Q_{n-1}^{\lambda}(x)^2  - v_n^{\lambda}(x)^2,
\end{equation}
and similarly for $P_n$ using the corresponding Newton basis. Moreover, the convergence results of \cite{SH16b} are possible because of the use of the general results of 
\cite{DeVore2013}, which apply to the case of approximation schemes which are best approximations, like it is the case for interpolation in $\calh$, $\calhl$. 
Instead, it is clear from \cref{prop:Qn} that both these properties are not realized by $P_n^{\lambda}$. Nevertheless, we will overcome the 
problem by relating $P_n^{\lambda}$ to $P_n$ and $Q_n^{\lambda}$ as follows.

We remark that a step of the following proof requires the use of equation \cref{eq:dominik}, which in turn follows from 
\cref{prop:power_and_lagrange}. Both results are proven independently from the next proposition, so we postpone them to simplify the exposition of the 
results.

\begin{proposition}\label{prop:pf_relation}
For $x\in\Omega$ it holds $P_n^{\lamdba}(x)\leq \sqrt{\lambda}$, while  we have
\begin{align}\label{eq:pf_rel_2}
P_n(x) \leq P_n^{\lambda}(x) &\leq  Q_n^{\lambda}(x) \;\;\fa\;\; x\in\Omega\setminus X_n.
\end{align}
In particular 
\begin{align}\label{eq:pf_rel_3}
\left\|P_n^{\lambda}\right\|_{L_{\infty}(\Omega)} &\leq  \left\|Q_n^{\lambda}\right\|_{L_{\infty}(\Omega)}.
\end{align}
\end{proposition}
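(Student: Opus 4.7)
My plan is to establish the three pointwise bounds separately---each by a different characterization of $s_n^\lambda$---and then combine them for the $L_\infty$ inequality.

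First, for $P_n^\lambda(x) \leq \sqrt{\lambda}$: this bound cannot hold at every $x \in \Omega$ as literally written (take $f = K(\cdot,x)$ with $x$ far from $X_n$: the regularized interpolant then nearly vanishes while $f(x) = K(x,x)$), so I read it as a statement at $x \in X_n$. Here I would use the variational definition \cref{eq:representer}: testing the regularized objective with the admissible $s = f$ gives
\[
\sum_{i=1}^n \bigl(f(x_i) - s_n^\lambda(f)(x_i)\bigr)^2 + \lambda \|s_n^\lambda(f)\|_\calh^2 \leq \lambda \|f\|_\calh^2,
\]
and discarding the second term while isolating one summand yields $|f(x_i) - s_n^\lambda(f)(x_i)| \leq \sqrt{\lambda}\,\|f\|_\calh$ for every $x_i \in X_n$; the supremum over $f \in \calh$ then gives the claim.

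Next I would handle the two middle inequalities on $\Omega \setminus X_n$. The lower bound $P_n(x) \leq P_n^\lambda(x)$ is immediate from \cref{prop:Qn}, which identifies $P_n^\lambda(x) = \|K(\cdot,x) - s_n^\lambda(K(\cdot,x))\|_\calh$; since $s_n^0(K(\cdot,x))$ is the orthogonal $\calh$-projection of $K(\cdot,x)$ onto $V(X_n)$ while $s_n^\lambda(K(\cdot,x)) \in V(X_n)$ is not a best approximation, the projection minimizes the distance and $P_n(x) \leq P_n^\lambda(x)$ follows (in fact on all of $\Omega$). For the upper bound $P_n^\lambda(x) \leq Q_n^\lambda(x)$ I would invoke the decomposition recalled right before the proposition: for $f \in \calh$ and $x \notin X_n$ the delta contributions cancel, so $s_n^\lambda(f)(x) = I_n^\lambda(f)(x)$; combined with \cref{eq:power_bound} and the equality $\|f\|_{\calhl} = \|f\|_\calh$ for $f \in \calh$ (a direct consequence of \cref{prop:hl}, \cref{item3}, as the $\calh_{K'}$-component vanishes) this yields $|f(x) - s_n^\lambda(f)(x)| \leq Q_n^\lambda(x)\,\|f\|_\calh$, and taking the supremum over $f \in \calh$ concludes.

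The $L_\infty$ bound is then assembled as follows: on $\Omega \setminus X_n$ we already have $P_n^\lambda(x) \leq Q_n^\lambda(x) \leq \|Q_n^\lambda\|_{L_\infty(\Omega)}$, so only the contribution of $X_n$ remains. The main obstacle, which I expect to be the delicate step, is to verify $\sqrt{\lambda} \leq \|Q_n^\lambda\|_{L_\infty(\Omega)}$; this does not follow from the explicit formulas in \cref{eq:power_functions} and is precisely where the deferred \cref{eq:dominik} (itself a consequence of \cref{prop:power_and_lagrange}) must enter. Heuristically one expects $Q_n^\lambda(x)^2 \geq \lambda$ for $x$ away from $X_n$, since $\|K_\lambda(\cdot,x)\|_\calhl^2 = K(x,x)+\lambda$ while the projection onto $V_n^\lambda$ cannot absorb more than a $K(x,x)$-sized piece of the squared norm; once this lower bound is rigorized via \cref{eq:dominik}, combining it with the two regime-wise estimates produces the global inequality and completes the proof.
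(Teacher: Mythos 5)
Your proof is correct and follows essentially the same route as the paper: the first inequality in \cref{eq:pf_rel_2} comes from the best-approximation property of $s_n^0$ via \cref{prop:Qn}; the second comes from the pointwise identity $s_n^\lambda(f)(x) = I_n^\lambda(f)(x)$ for $x\notin X_n$ together with the norm comparison from \cref{prop:hl}; and the $L_\infty$ bound is assembled by deferring the key lower bound $Q_n^\lambda \geq \sqrt{\lambda}$ on $\Omega\setminus X_n$ to \cref{eq:dominik}, exactly as the paper does. Two points where your argument is slightly sharper: you correctly observe that the bound $P_n^\lambda(x)\leq\sqrt{\lambda}$ holds only for $x\in X_n$ (the paper's own proof treats only this case, despite the statement reading ``for $x\in\Omega$''), and you supply a short self-contained variational proof of it by testing the objective in \cref{eq:representer} with the admissible candidate $s=f$, whereas the paper cites Proposition~3.1 of \cite{WendlandRieger2005}; you also use the exact identity $\|f\|_{\calhl} = \|f\|_{\calh}$ for $f\in\calh$ (which follows from the uniqueness of the decomposition in \cref{prop:hl}), where the paper invokes only the one-sided inclusion of unit balls---both are adequate, yours is marginally more precise.
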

\begin{proof}
First, for $x\in X_n$ it holds  $|f(x_i) - s_n^{\lambda}(x_i)|\leq \sqrt{\lambda}\|f\|_{\calh}$ (see e.g. Proposition 3.1 in \cite{WendlandRieger2005}), so 
$P_n^{\lamdba}(x)\leq \sqrt{\lambda} $ thanks to the definition \cref{eq:Qn} .
 
The first inequality in \cref{eq:pf_rel_2} follows from \cref{prop:Qn} and the definitions \cref{eq:power_functions}. Indeed, both $s_n^{0}$ and 
$s_n^{\lambda}$ 
are maps into $V(X_n)$, but the interpolant is the best approximation operator in $\calh$, so 
$$
\left\|K(\cdot, x) - s_n^{0}(K(\cdot, x))\right\|_{\calh}\leq \left\|K(\cdot, x) - s_n^{\lambda}(K(\cdot, x))\right\|_{\calh}.
$$
 
For the second inequality, and for $x\notin X_n$ and $f\in\calhl$, we observed in \cref{sec:iterative} that $s_n^{\lambda}(f)(x) = 
I_n^{\lambda}(f)(x)$. Moreover, from Proposition 
\cref{prop:hl}, 
we have that the unit ball in $\calh$ is contained in the unit ball of $\calhl$, thus using 
a standard argument
we can conclude that for  $x\notin X_n$
\begin{align*}
 P^{\lambda}_n(x) &= \sup\limits_{\stackrel{f\in\calh}{f\neq 0}}\frac{\left|f(x) - s_n^{\lambda}(f)(x)\right|}{\|f\|_{\calh}} \leq 
\sup\limits_{\stackrel{f\in\calhl}{f\neq 0}}\frac{\left|f(x) - s_n^{\lambda}(f)(x)\right|}{\|f\|_{\calhl}}\\
&= \sup\limits_{\stackrel{f\in\calhl}{f\neq 0}}\frac{\left|f(x) - I_n^{\lambda}(f)(x)\right|}{\|f\|_{\calhl}}=Q_n^{\lambda}(x).
\end{align*}

Finally, since $Q_n^{\lambda}$ is the power function of the interpolation with the strictly positive definite kernel $K_{\lambda}$, it holds $Q_n^{\lamdba}(x) 
= 0$ if and only if $x\in X_n$. In particular for all $n$ the maximum of $Q_n^{\lambda}$ is reached for $x\notin X_n$, and for this point it holds 
$P_n^{\lambda}(x) \leq Q_n^{\lambda}(x)$. Since $P_n^{\lambda}(x)\leq \sqrt{\lambda}$ as just proved, we just need to show that $Q_n^{\lambda}(x)\geq 
\sqrt{\lambda}$ for $x\in\Omega\setminus X_n$, which follows from \cref{eq:dominik}. 
\end{proof}

An illustration of the relation between the three power functions is provided in Figure \cref{fig:pf_example} in the case $\Omega:=[0,1]$, $X_4:=\{0.1,0.4,0.7,0.8\}$, 
the Gaussian kernel $K(x, y):=\exp\left(- (4 \|x-y\|_2)^2\right)$ and $\lambda = 0.1$.

\begin{figure}[ht!]
\begin{center}
 \includegraphics[width=0.8\textwidth]{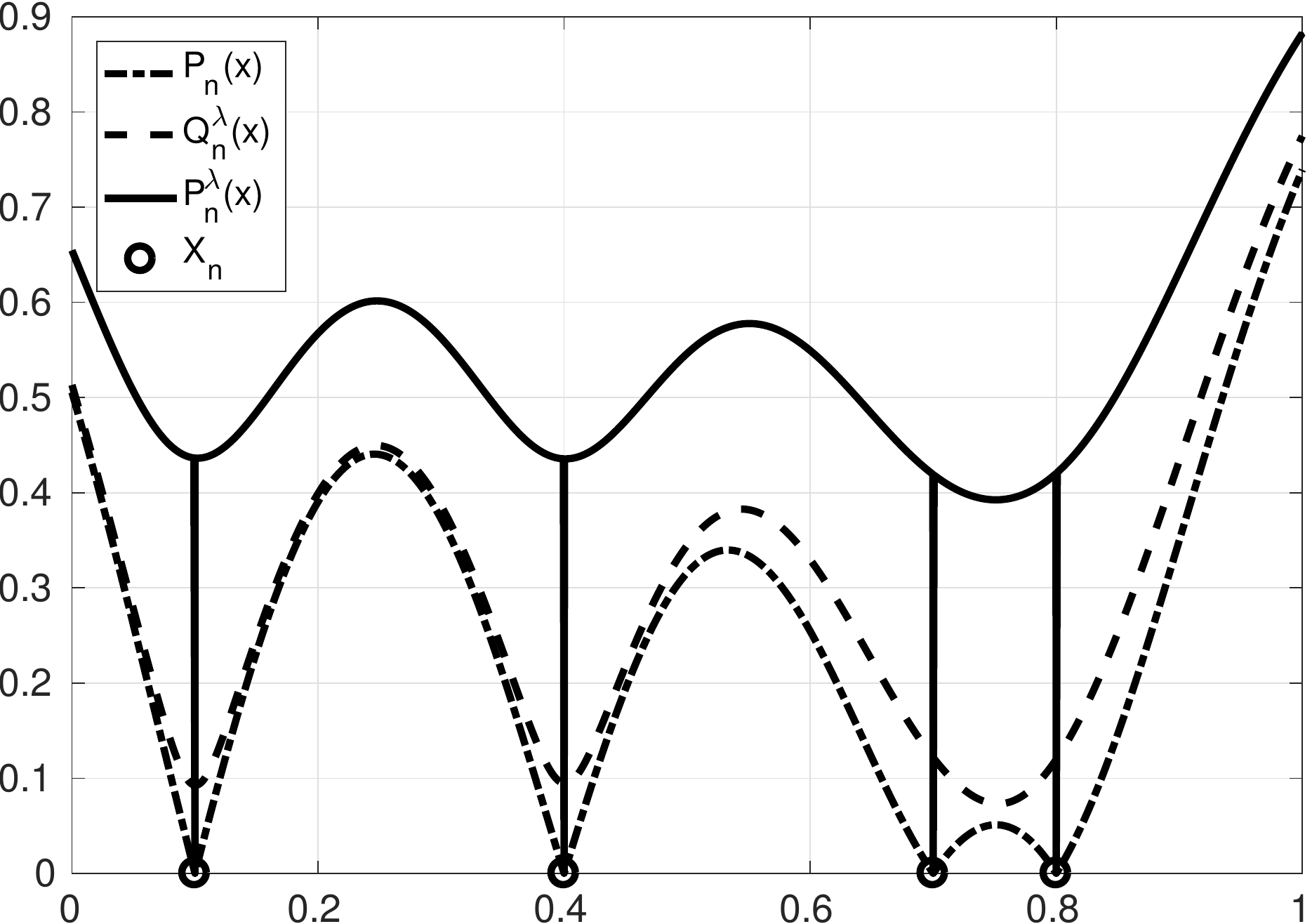}
\end{center}
\caption{Power functions $P_n$ (interpolation in $\calh$), $P_n^{\lambda}$ (regularized interpolation) and the discontinuous $Q_n^{\lambda}$ (interpolation in 
$\calhl$) for the Gaussian kernel in $[0, 1]$ and a given point set $X_n$, $n=4$.}\label{fig:pf_example}
\end{figure}

Using  $Q_n^{\lambda}$ as an upper bound for $P_n^{\lambda}$ solves both issues, since $Q_n^{\lambda}$ can be efficiently updated by \cref{eq:Qn}, and it is 
related to a best 
approximation 
operator, i.e., the orthogonal projection in $\calhl$. 
To complete the analysis we need to estimate the decay rate of $Q_n^{\lambda}$, an we will do so by relating it to the one of $P_n$. 
To this end, we first state the following, which is an easy generalization of the case of interpolation.

\begin{proposition}\label{prop:lagrange}
Let $X_n\subset\Omega$ and $\lambda>0$ if $K$ is positive definite or $\lambda\geq 0$ if $K$ is strictly positive definite. 
Then there exists a Lagrange basis $\{\ell_j^{\lambda}\}_{j=1}^n$ of $V(X_n)$ s.t.
\begin{equation}\label{eq:lagrange_interp}
s_n^{\lambda}(f)(x) =\sum_{i=1}^n f(x_i)\ell_j^{\lambda}(x), \; x\in\Omega .
\end{equation}
The basis is defined by 
\begin{equation}\label{eq:lagrange}
\ell_j^{\lambda}=\sum_{i=1}^n ((A+{\lambda} I)^{-1})_{ij} K(\cdot, x_i).
\end{equation}
and $\ell_j^{\lambda}(x_i) = \delta_{ij}$ if $\lambda=0$.

Moreover, the $\ell_2$-Lebesgue function $\Lambda_{n,2}^{\lambda}(x)$ can be computed for all $x\in\Omega$ as 
\begin{equation}
\Lambda_{n, 2}^{\lambda}(x):=\sup\limits_{\stackrel{f\in\ns}{f\neq 0}} 
\frac{|s_n^{{\lambda}}(f)(x)|}{\|f_{|X_n}\|_{\ell_2(X_n)}}=\sqrt{\sum_{j=1}^n\ell_j^{\lambda}(x)^2},
\end{equation}
and it holds $\Lambda_n^{\lambda}(x)<\Lambda_n^{\mu}(x)$ if $\lambda>\mu\geq 0$.
\end{proposition}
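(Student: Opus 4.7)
The plan is to handle the three claims in order: representation via the Lagrange basis, the Cauchy–Schwarz computation of the $\ell_2$-Lebesgue function, and finally the strict monotonicity in $\lambda$.

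First I would derive the formula \eqref{eq:lagrange} directly from Theorem \ref{th:representer}. Writing $s_n^{\lambda}(f) = \sum_{j} \alpha_j K(\cdot, x_j)$ with $\alpha = (A+\lambda I)^{-1} b$ and $b_i = f(x_i)$, and expanding componentwise, one gets
$$
s_n^{\lambda}(f)(x) = \sum_{j=1}^n \sum_{i=1}^n ((A+\lambda I)^{-1})_{ji}\, f(x_i)\, K(x, x_j).
$$
Swapping the order of summation and using symmetry of $(A+\lambda I)^{-1}$ produces \eqref{eq:lagrange_interp} with $\ell_j^{\lambda}$ as defined. Linear independence of the $\ell_j^{\lambda}$ follows from invertibility of $A+\lambda I$ together with the fact that $\{K(\cdot,x_i)\}$ spans $V(X_n)$, so they form a basis. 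The cardinality property $\ell_j^0(x_i) = \delta_{ij}$ reduces to the identity $\sum_k A_{ik} (A^{-1})_{kj} = \delta_{ij}$, i.e., $A A^{-1} = I$, which is available whenever $K$ is strictly positive definite.

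Next, for the Lebesgue function, equation \eqref{eq:lagrange_interp} writes $s_n^{\lambda}(f)(x) = \langle f_{|X_n}, \ell^{\lambda}(x)\rangle_{\ell_2}$, where $\ell^{\lambda}(x) := (\ell_1^{\lambda}(x),\dots,\ell_n^{\lambda}(x))^T$. By Cauchy–Schwarz,
$$
|s_n^{\lambda}(f)(x)| \le \|f_{|X_n}\|_{\ell_2(X_n)}\,\|\ell^{\lambda}(x)\|_{\ell_2},
$$
giving the upper bound. For equality one needs an $f\in\calh$ with $f_{|X_n}$ parallel to $\ell^{\lambda}(x)$; taking $f := \sum_j c_j K(\cdot,x_j)$ with $c := A^{-1}\ell^{\lambda}(x)$ (well defined when $K$ is strictly positive definite) does the job, and this is the one technical point where one should note that the sup is actually attained.

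Finally, for the strict monotonicity I would write $\ell^{\lambda}(x) = (A+\lambda I)^{-1} k(x)$ with $k(x) := (K(x,x_1),\dots,K(x,x_n))^T$, so that
$$
\Lambda_{n,2}^{\lambda}(x)^2 = k(x)^T (A+\lambda I)^{-2} k(x).
$$
Using the spectral decomposition $A = \sum_i \sigma_i u_i u_i^T$ with $\sigma_i\geq 0$ gives
$$
\Lambda_{n,2}^{\lambda}(x)^2 = \sum_{i=1}^n \frac{(u_i^T k(x))^2}{(\sigma_i + \lambda)^2},
$$
and each term is strictly decreasing in $\lambda$. This yields the claimed strict inequality whenever $k(x)\neq 0$; since $k(x) = 0$ forces $s_n^{\lambda}(f)(x) \equiv 0$ and thus $\Lambda^{\lambda}_{n,2}(x)=0$ for every $\lambda$, this is the only case requiring a brief comment. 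I expect this last step (the spectral / monotonicity argument, together with handling the degenerate case $k(x)=0$) to be the main technical point, whereas the rest is essentially bookkeeping with the Representer Theorem and Cauchy–Schwarz.
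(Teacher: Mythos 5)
Your proof follows essentially the same route as the paper's: the Representer Theorem yields the Lagrange form, Cauchy--Schwarz gives the $\ell_2$-Lebesgue function, and the spectral decomposition of $A$ gives the monotonicity in $\lambda$. Two remarks are worth making. First, for the attainment step you write $c:=A^{-1}\ell^{\lambda}(x)$, which presupposes $A$ invertible; the proposition, however, also allows merely positive definite $K$ with $\lambda>0$, where $A$ may be singular. This is easily repaired: since the orthogonal projection of $K(\cdot,x)$ onto $V(X_n)$ shows $k_x\in\mathrm{range}(A)$, and $(A+\lambda I)^{-1}$ preserves $\mathrm{range}(A)$, there is always some $c$ with $Ac=\ell^{\lambda}(x)$ even when $A$ is singular. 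Second, and more substantively, your observation about the degenerate case $k(x)=0$ is a genuine refinement: the paper's own argument only establishes $\Lambda_{n,2}^{\mu}(x)^2-\Lambda_{n,2}^{\lambda}(x)^2 = k_x^T\bigl((A+\mu I)^{-2}-(A+\lambda I)^{-2}\bigr)k_x\geq 0$, i.e.\ a non-strict inequality, and indeed for compactly supported kernels (e.g.\ Wendland) one can have $k(x)=0$ at points far from $X_n$, in which case the strict inequality claimed in the proposition fails. So your caveat is not pedantic bookkeeping but catches a genuine overstatement in the result.
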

\begin{proof}
It is clear that \cref{eq:lagrange} defines a basis of $V(X_n)$ since the coefficient matrix is invertible, and formula \cref{eq:lagrange_interp} holds. 
In particular, it holds
\begin{equation*}
|s_n^{\lambda}(f)(x)| \leq \sum_{j=1}^n \left|f(x_j) \ell_j^{\lambda}(x)\right|\leq \left(\sum_{j=1}^n f(x_j)^2 \right)^{1/2} 
\left(\sum_{j=1}^n\ell_j^{\lambda}(x)^2\right)^{1/2},
\end{equation*}
and the equality is reached, for a fixed $x\in\Omega$, by considering $f:=f_x$ with $f_x(x_j) := \ell_j^{\lambda}(x)$.

Defining $k_x:=[K(x, x_1), \dots, K(x, x_n)]^T$, for $x\in\Omega$ we have from \cref{eq:lagrange} that $\ell_j^{\lamdba}(x) = ((A+{\lambda} I)^{-1} k_x)_j$,  thus 
\begin{align}\label{eq:intermediate_lagrange}
\Lambda_{n, 2}^{\lambda}(x)^2 &= \sum_{j=1}^n\ell_j^{\lambda}(x)^2  = k_x^T (A+{\lambda} I)^{-2} k_x.
\end{align}
In particular for $\lambda>\mu\geq 0$ we have 
\begin{align*}
\Lambda_n^{\mu}(x) - \Lambda_n^{\lambda}(x) = k_x^T ((A+{\mu} I)^{-2} - (A+{\lambda} I)^{-2})k_x \geq 0,
\end{align*}
since the matrix is positive semidefinite. Indeed, if $A= U\Sigma U^T$ is an eigen-decomposition of $A$ with 
$\Sigma:=\mbox{diag}\{\sigma_i\}$ and $\sigma_1\geq \sigma_2\geq \sigma_n\geq 0$, we have $A +\lambda I = U (\Sigma + \lambda I) U^T$ and thus $U ((A+{\mu} I)^{-2} - 
(A+{\lambda} I)^{-2}) U^T$ is a diagonal matrix with diagonal elements
\begin{align*}
\frac{1}{(\sigma_i + \mu)^{2}}  - \frac{1}{(\sigma_i + \lambda)^{2}},
\end{align*}
which are non negative if $0\leq \mu\leq \lambda$.

\end{proof}

Using $\Lambda_{n, 2}^{\lambda}$ we can now exactly quantify the difference between $P_n^{\lambda}$ and $Q_n^{\lambda}$.

\begin{proposition}\label{prop:power_and_lagrange}
For all $x\in\Omega$ and $0\leq \mu\leq \lambda$ we have
\begin{align}\label{eq:second_bound_qn}
Q_n^{\lambda}(x)^2 \leq P_n^{\mu}(x)^2 + \lambda\left(1 + \Lambda_{n, 2}^{\mu}(x)^2\right),
\end{align}
and equality holds for $\mu=\lambda$ and $x\notin X_n$.
\end{proposition}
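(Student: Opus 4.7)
The plan is to exploit the fact that $I_n^{\lambda}(K_{\lambda}(\cdot,x))$ is the orthogonal projection of $K_{\lambda}(\cdot,x)$ onto $V_{\lambda}(X_n):=\Sp{K_{\lambda}(\cdot,x_j)}_{j=1}^n$ in $\calhl$, and therefore
$$Q_n^{\lambda}(x)^2=\|K_{\lambda}(\cdot,x)-I_n^{\lambda}(K_{\lambda}(\cdot,x))\|_{\calhl}^2\leq \|K_{\lambda}(\cdot,x)-g\|_{\calhl}^2$$
for every competitor $g=\sum_{j=1}^n b_j K_{\lambda}(\cdot,x_j)\in V_{\lambda}(X_n)$. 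The whole argument then consists in choosing $b\in\R^n$ cleverly (depending on $\mu$) and evaluating the right-hand side via the orthogonal decomposition provided by \cref{prop:hl}.

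First I would use that $K_{\lambda}(\cdot,y)=K(\cdot,y)+K'(\cdot,y)$ with $K(\cdot,y)\in\calh$ and $K'(\cdot,y)\in\calh_{K'}(\Omega)$, so by \cref{item3} of \cref{prop:hl}
$$\|K_{\lambda}(\cdot,x)-g\|_{\calhl}^2=\Bigl\|K(\cdot,x)-\sum_j b_j K(\cdot,x_j)\Bigr\|_{\calh}^2+\Bigl\|K'(\cdot,x)-\sum_j b_j K'(\cdot,x_j)\Bigr\|_{\calh_{K'}}^2.$$
Since $K'(x_i,x_j)=\lambda\delta_{ij}$ and, for $x\notin X_n$, $K'(x,x_j)=0$ for every $j$, the second summand reduces to $\lambda(1+\|b\|_2^2)$. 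I would then plug in $b_j:=\ell_j^{\mu}(x)$: exploiting the symmetry of $K$ and of $(A+\mu I)^{-1}$, the formula \cref{eq:lagrange} gives $\sum_j \ell_j^{\mu}(x) K(\cdot,x_j)=s_n^{\mu}(K(\cdot,x))$, and by definition $\|b\|_2^2=\Lambda_{n,2}^{\mu}(x)^2$. Combining this with $P_n^{\mu}(x)^2=\|K(\cdot,x)-s_n^{\mu}(K(\cdot,x))\|_{\calh}^2$ from \cref{prop:Qn} yields \cref{eq:second_bound_qn} for $x\notin X_n$; the case $x\in X_n$ is trivial because $Q_n^{\lambda}(x)=0$ there, the kernel $K_{\lambda}$ being strictly positive definite.

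For the equality claim with $\mu=\lambda$ and $x\notin X_n$, I would verify that the optimal competitor (the orthogonal projection itself) coincides with the one I chose. A direct computation based on $I_n^{\lambda}(f)(\cdot)=\sum_j f(x_j)\,\widetilde{\ell}_j^{\lambda}(\cdot)$ where $\widetilde{\ell}_j^{\lambda}=\sum_i((A+\lambda I)^{-1})_{ij}K_{\lambda}(\cdot,x_i)$, applied to $f=K_{\lambda}(\cdot,x)$ with $K_{\lambda}(x_j,x)=K(x_j,x)$ since $x\notin X_n$, together with the symmetry of $(A+\lambda I)^{-1}$, gives $I_n^{\lambda}(K_{\lambda}(\cdot,x))=\sum_j \ell_j^{\lambda}(x)\,K_{\lambda}(\cdot,x_j)$; hence the inequality above becomes an equality.

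The only non-routine part is spotting the right competitor $b_j=\ell_j^{\mu}(x)$ and recognizing that it produces $s_n^{\mu}(K(\cdot,x))$ in the $\calh$-component of the decomposition. Everything else is careful bookkeeping: the $\calhl$-orthogonality of the two components, the explicit form of the $\calh_{K'}$-inner product on the indicator functions, and the identification with $P_n^{\mu}$ and $\Lambda_{n,2}^{\mu}$.
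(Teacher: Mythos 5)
Your proof is correct and takes a genuinely different route from the paper. The paper works entirely at the level of the kernel matrix: it expands $P_n^{\mu}(x)^2$, $Q_n^{\lambda}(x)^2$ and $\Lambda_{n,2}^{\mu}(x)^2$ as quadratic forms $k_x^T M\, k_x$, assembles the difference $Q_n^{\lambda}(x)^2 - P_n^{\mu}(x)^2 - \lambda(1+\Lambda_{n,2}^{\mu}(x)^2)$ into a single symmetric matrix $B(\mu,\lambda)$, and diagonalizes $A$ to show that $B(\mu,\lambda)$ is negative semidefinite, with eigenvalues $-(\lambda-\mu)^2/\bigl((\lambda+\sigma_i)(\mu+\sigma_i)^2\bigr)$ that vanish exactly when $\mu=\lambda$. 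Your argument instead leans on the variational characterization $Q_n^{\lambda}(x)^2 = \mathrm{dist}_{\calhl}\bigl(K_{\lambda}(\cdot,x), V_{\lambda}(X_n)\bigr)^2$ together with the orthogonal splitting $\calhl = \calh\oplus\calh_{K'}$ from \cref{prop:hl}: the inequality comes for free from picking the sub-optimal competitor $b_j=\ell_j^{\mu}(x)$, and the two pieces of the decomposition land exactly on $P_n^{\mu}(x)^2$ and $\lambda(1+\Lambda_{n,2}^{\mu}(x)^2)$. What the paper's computation buys is the explicit spectral form of the gap, which makes it obvious that the inequality is strict for $\mu<\lambda$; your approach gives that fact less directly (you only get sharpness at $\mu=\lambda$ by identifying the chosen competitor with the projection). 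What yours buys is conceptual clarity: the $\lambda(1+\Lambda_{n,2}^{\mu}(x)^2)$ term is visibly the "cost" of the $\calh_{K'}$-component of the error, and the role of best approximation in $\calhl$ is made transparent, in the same spirit in which \cref{eq:pf_rel_2} is proved in \cref{prop:pf_relation}. One small remark: as written, your simplification of the $\calh_{K'}$-norm to $\lambda(1+\|b\|_2^2)$ already uses $x\notin X_n$, so it is cleanest to state upfront (as you essentially do) that you treat $x\in X_n$ by the trivial observation $Q_n^{\lambda}(x)=0$ and restrict the competitor argument to $x\notin X_n$.
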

\begin{proof}
We use the formula \cref{eq:reg_power} for the power function $P_n^{\mu}$, but we express the interpolant in terms of the Lagrange basis as in 
\cref{eq:lagrange_interp}. We use again the same notation $k_x$ as in the previous proof and we obtain
\begin{align}\label{eq:power_intermediate}
P_n^{\mu}(x)^2 &= \left\|K(\cdot, x) - s_n^{\mu}(K(\cdot, x))\right\|_{\calh}^2\\
\nonumber&=K(x, x) - 2 \sum_{j=1}^n \ell_j^{\mu}(x) K(x, x_j) + \sum_{i,j=1}^n \ell_j^{\mu}(x) 
\ell_i^{\mu}(x) K(x_i, x_j)\\
\nonumber&= K(x, x) - 2 k_x ^T (A + \mu I)^{-1} k_x + k_x ^T (A + \mu I)^{-1} A (A + \mu I)^{-1} k_x\\
\nonumber&= K(x, x) - k_x ^T (A + \mu I)^{-1}( 2(A + \mu I)- A) (A + \mu I)^{-1} k_x\\
\nonumber& = K(x, x) - k_x ^T (A + \mu I)^{-1}(A  + 2\mu I) (A + \mu I)^{-1} k_x.
\end{align}
In particular, for $\mu = 0$ we obtain the usual formula
\begin{align*}
P_n(x)^2 = P_n^{0}(x)^2 &= K(x, x) - k_x ^T A^{-1}A  A ^{-1} k_x = K(x, x) - k_x ^T A^{-1} k_x,
\end{align*}
and the same holds for the interpolatory power function $Q_n^{\lambda}$, where instead we define $k_x^{\lambda}:=[K_{\lambda}(x, x_1), \dots, K_{\lambda}(x, 
x_n)]^T$ and obtain
\begin{align}\label{eq:qower_intermediate}
Q_n^{\lambda}(x)^2 &= K_{\lambda}(x, x) - (k_x^{\lambda}) ^T (A + \lambda I)^{-1} k_x^{\lambda}.
\end{align}
Moreover, we have $\Lambda_n^{\mu}(x)^2 = k_x^T (A+\mu I)^{-2} k_x$ from \cref{eq:intermediate_lagrange}.

If $x\in X_n$ it holds $Q_n^{\lambda}(x) = 0$, so \cref{eq:second_bound_qn} easily follows since the right hand side is non negative.

If instead $x\notin X_n$, it holds $k_x^{\lambda} = k_x$ since  $K_{\lambda}(x, x_i) = K(x, x_i)$ for $1\leq i \leq n$, thus \cref{eq:power_intermediate} and 
\cref{eq:qower_intermediate} imply that
\begin{align*}
&Q_n^{\lambda}(x)^2 - P_n^{\mu}(x)^2 - \lambda \Lambda_n^{\mu}(x)^2 - \lambda=\\
&= k_x^T \left(-(A + \lambda I)^{-1} + (A + \mu I)^{-1}(A  + 2\mu I) (A + \mu 
I)^{-1} - \lambda (A+\mu I)^{-2}\right) k_x.
\end{align*}
We denote as $B(\mu,\lambda)$ the matrix in the right hand side. Using matrices $U$, $\Sigma$ as in the proof of \cref{prop:lagrange}, we have that $U B(\mu, 
\lambda) U^T$ has diagonal elements
\begin{align*}
\rho_i:=&-\frac{1}{\lambda+\sigma_i}+\frac{2 \mu +\sigma_i}{(\mu +\sigma_i)^2}-\frac{\lambda }{(\mu +\sigma_i)^2}=-\frac{(\lambda -\mu )^2}{(\lambda +\sigma_i) (\mu 
+\sigma_i)^2},
\end{align*}
which are negative for all $0\leq \mu<\lambda$, and exactly zero for $\mu=\lambda$, i.e., $B(\mu, \lambda)$ is negative definite for $\mu<\lambda$ and 
the zero matrix for $\mu=\lamdba$, and thus the statements follows.
\end{proof}

In proving the convergence of the algorithm we will only need the case $K$ strictly positive definite and $\mu=0$ in \cref{eq:pf_rel_2}. Nevertheless, the 
case $\mu= \lambda>0$ allows to conclude that the right hand side is well defined also when $K$ is positive definite, since $P_n^{\lambda}$ is still well defined in 
this 
case, and especially in this case for $x\notin X_n$ \cref{eq:pf_rel_2} implies that 
\begin{equation}\label{eq:dominik}
Q_n^{\lambda}(x)^2 = P_n^{\lambda}(x)^2 + \lambda\left(1 + \Lambda_{n, 2}^{\lambda}(x)^2\right) \geq \lambda,
\end{equation}

i.e., we can expect $\left\|Q_n^{\lambda}\right\|_{L_{\infty}(\Omega)}$ to converge at most to $\sqrt{\lambda}$ for $n\to\infty$, and not to $0$.

\subsection{The case of translational invariant kernels}\label{sec:tr_inv}
In some notable cases, convergence rates can be derived for the regularized interpolation process using {sampling inequalities}. They apply to translational 
invariant kernels $K(x, y) :=\Phi(x -y)$ which are strictly positive definite on ${\mathbb R}^d$, and such that $\Phi$ has a continuous Fourier transform $\hat\Phi$ on 
${\mathbb R}^d$.
In this case, the native space on a set $\Omega\subset{\mathbb R}^d$ satisfying an interior cone condition can be described in terms of $\hat \Phi$. 
In particular, if there exist $c_{\Phi}, C_{\Phi}>0$ and $\tau\in\N$, $\tau>d/2$, such that
$$
c_{\Phi}\left(1 + \|\omega\|_2^2\right)^{-\tau} \leq \hat\Phi(\omega) \leq C_{\Phi}\left(1 + \|\omega\|_2^2\right)^{-\tau},
$$ 
shortly $\hat \Phi(\omega) \sim (1 + \|\omega\|_2^2)^{-\tau}$, then $K\in\mathcal C^{2\tau}({\mathbb R}^d\times{\mathbb R}^d)$ and $\calh({\mathbb R}^d)$ is norm 
equivalent to the 
Sobolev space $W_2^{\tau}({\mathbb R}^d)$. Examples of kernels of this type are e.g. the Wendland kernels \cite{Wendland1995a}.
If instead the Fourier transform decays faster than any polynomial one has a native space of infinitely smooth functions, and this is the case e.g. of the Gaussian 
kernel or the inverse multiquadric (IMQ) kernel.

Sampling inequalities  quantify the error in terms of the {fill distance}
$$
h_n:=h_{X_n, \Omega}:= \sup_{x\in\Omega}\min_{x_j \in X_n} \|x-x_j\|_2,
$$
and they bound the error in approximating the derivative $D^a(f)$, 
where $a:=(a_1, \dots, a_d)\in\N_0^d$ is a multi index and $|a|:=a_1+\dots+a_d$.

We state only the particular case of the bounds in the $L_{\infty}$-norm, and refer to the cited papers for a more general version.
\begin{theorem}[\cite{Rieger2008b,WendlandRieger2005}]\label{th:sampling}
Assume $\Omega$ is bounded and satisfies an interior cone condition.
\begin{enumerate}[i)]
 \item\label{case1} If $\hat \Phi(\omega) \sim (1 + \|\omega\|_2^2)^{-\tau}$, $\tau>d/2$, there exist 
constants $C, h_0$ such that for all $X_n\subset\Omega$ with $h_n\leq h_0$, $0\leq |a|<\tau-d/2$, and $f\in\calh$ it holds
\begin{equation}\label{eq:sampling_finite_pre}
\left\|D^a(f)\right\|_{L_{\infty}(\Omega)} \leq C h_n^{-|a|} \left(h_n^{\tau - d/2} \|f\|_{\calh} + \|f_{|X_n}\|_{\ell_{\infty}(X_n)}\right),
\end{equation}
and, under the same hypotheses and  for any $\lambda>0$, it holds
\begin{equation}\label{eq:sampling_finite}
\left\|D^a\left(f - s_n^{\lambda}(f)\right)\right\|_{L_{\infty}(\Omega)} \leq C h_n^{-|a|} \left(h_n^{\tau - d/2} + \sqrt{\lambda}\right) \|f\|_{\calh}.
\end{equation}

 \item\label{case2} Assume additionally that $\Omega$ has a Lipschitz boundary. If $K$ is the Gaussian or IMQ kernel there exist constants $C', C'', h_0'$ such that for 
all 
$X_n\subset\Omega$ with $h_n\leq h_0'$, $a\in\N_0^d$, and $f\in\calh$ it holds
\begin{equation}\label{eq:sampling_infinite_pre}
\left\|D^a\left(f)\right)\right\|_{L_{\infty}(\Omega)} \leq e^{-C' / \sqrt{h_n}}\|f\|_{\calh} + C'' h_n^{-|a|} \|f_{|X_n}\|_{\ell_{\infty}(X_n)},
\end{equation}
and, under the same hypotheses and  for any $\lambda>0$, it holds
\begin{equation}\label{eq:sampling_infinite}
\left\|D^a\left(f - s_n^{\lambda}(f)\right)\right\|_{L_{\infty}(\Omega)} \leq \left(2 e^{-C' / \sqrt{h_n}}+C'' \sqrt{\lambda}\ h_n^{-|a|}\right) \|f\|_{\calh}.
\end{equation}
\end{enumerate}
\end{theorem}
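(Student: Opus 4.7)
The plan is to reduce the error estimates \eqref{eq:sampling_finite} and \eqref{eq:sampling_infinite} to the generic sampling inequalities \eqref{eq:sampling_finite_pre} and \eqref{eq:sampling_infinite_pre} applied to the error function $g := f - s_n^{\lambda}(f)$. Once the inequalities are available for every native space function, substituting $g$ only requires controlling two quantities, its native norm $\|g\|_{\calh}$ and its discrete $\ell_{\infty}$-norm $\|g_{|X_n}\|_{\ell_{\infty}(X_n)}$. Both are direct consequences of the variational characterization \eqref{eq:representer}--\eqref{eq:functionals} of $s_n^{\lambda}(f)$.

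Specifically, testing the minimization problem with $s = f$ and using $\mathcal L(f)=0$ yields
\[
\mathcal L(s_n^{\lambda}(f)) + \lambda \|s_n^{\lambda}(f)\|_{\calh}^2 \leq \mathcal L(f) + \lambda\|f\|_{\calh}^2 = \lambda \|f\|_{\calh}^2.
\]
From this I read off $\|s_n^{\lambda}(f)\|_{\calh}\leq \|f\|_{\calh}$, hence $\|g\|_{\calh}\leq 2\|f\|_{\calh}$, and $|f(x_i) - s_n^{\lambda}(f)(x_i)|^2 \leq \mathcal L(s_n^{\lambda}(f)) \leq \lambda \|f\|_{\calh}^2$ for every $x_i\in X_n$, i.e., $\|g_{|X_n}\|_{\ell_{\infty}(X_n)}\leq \sqrt{\lambda}\,\|f\|_{\calh}$ (this is essentially the ingredient already used in \cref{prop:pf_relation}). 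Inserting these two bounds into \eqref{eq:sampling_finite_pre} and grouping the common factor $h_n^{-|a|}\|f\|_{\calh}$ produces \eqref{eq:sampling_finite} with an updated constant, and the same substitution in \eqref{eq:sampling_infinite_pre} yields \eqref{eq:sampling_infinite} with the leading factor $2$ in front of the exponential.

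This reduction leaves the generic sampling inequalities themselves, which are the genuinely hard part. For case \ref{case1} the standard blueprint is: exploit the interior cone condition to cover $\Omega$ by cone-shaped patches of diameter $O(h_n)$; on each patch construct a local polynomial $p$ of degree $\lfloor\tau\rfloor$ reproducing nearby sample values via a Bramble--Hilbert argument; combine the Sobolev embedding $\calh \simeq W_2^{\tau}(\Omega) \hookrightarrow C^{k}(\Omega)$ for $k<\tau-d/2$ with Markov--Bernstein inequalities to pass from $L_2$-control of $f-p$ to $L_{\infty}$-control of $D^{a}(f-p)$; and finally glue the local estimates into a global one. Case \ref{case2} follows the same philosophy, replacing polynomial approximation with Paley--Wiener entire-function approximation to exploit the super-algebraic decay of the Fourier transform of the Gaussian and IMQ kernels, which is what upgrades the polynomial rate into the exponential rate $e^{-C'/\sqrt{h_n}}$.

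The main obstacle is thus the derivation of the two pre-estimates: it rests on nontrivial approximation-theoretic machinery (sharp local polynomial or analytic reproduction combined with Markov-type inequalities on irregular domains), and delicate tracking of the dependence on $h_n$. Since these ingredients are fully developed in \cite{WendlandRieger2005,Rieger2008b}, the natural strategy is to cite those pre-estimates as a black box and carry out only the variational reduction sketched above, which is the path implicitly taken by the authors.
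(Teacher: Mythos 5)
Your reduction is correct and is exactly the argument in the cited references \cite{WendlandRieger2005,Rieger2008b}: the pre-estimates \eqref{eq:sampling_finite_pre}, \eqref{eq:sampling_infinite_pre} are applied to $g := f - s_n^{\lambda}(f)$, controlling $\|g\|_{\calh}\le 2\|f\|_{\calh}$ and $\|g_{|X_n}\|_{\ell_\infty(X_n)}\le\sqrt{\lambda}\,\|f\|_{\calh}$ by testing the variational problem at $s=f$, which yields \eqref{eq:sampling_finite} (up to the harmless factor $2$ absorbed into $C$) and \eqref{eq:sampling_infinite} exactly. The paper itself states \cref{th:sampling} as a citation without proof, so your strategy of treating the sampling inequalities as a black box and carrying out only the variational reduction is indeed what the authors implicitly rely on.
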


Observe that in the case of the Gaussian the exponential term in \cref{eq:sampling_infinite_pre} and \cref{eq:sampling_infinite} can be improved to $e^{C' \log(h_n) / 
\sqrt{h_n}}$, with a different 
constant $C'$ (\cite[Theorem 3.5]{Rieger2008b}). Moreover, the constant $C$ in \cref{eq:sampling_finite_pre} and \cref{eq:sampling_finite} includes a factor 
depending on $c_{\Phi}, C_{\Phi}$, which is needed to express the inequalities in terms of the $\calh$-norm instead of the Sobolev norm.

Both bounds are used in the corresponding papers to conclude, among other findings, that there is an upper bound on the maximal $\lambda$ to be used. 
Indeed, the resulting convergence rates are optimal in the sense that, by choosing $\lambda\leq h_n^{2 \tau -d}$ in the first case or $\lambda \leq (C'')^{-2} 
\exp\left(2 C' / \sqrt{h_n}\right) h_n ^{2|a|}$ in the second one, one gets, up to constants, the same order of pure interpolation (see 
\cite{Schaback1995}), while solving a potentially much better conditioned linear system. We use these bound to deduce convergence rates of $Q_n$ and 
$P_n^{\lambda}$.

To quantify the decay rate of $Q_n^{\lambda}$ using \cref{prop:power_and_lagrange}, we also need to control $\Lambda_{n,2}^{\lambda}$. This kind of stability 
is usually related to the \textit{separation distance}
$$
q_n:=q_{X_n}:= \frac12 \min_{x_i\neq x_j \in X_n} \|x_i-x_j\|_2,
$$
which can be used to estimate a lower bound on the minimal eigenvalue of the kernel matrix. It is known from \cite{DeMarchiSchaback2008,DeMarchi2010} that in the case of 
\cref{case1} of \cref{th:sampling} there is a constant $c>0$ such that
\begin{equation}\label{eq:dem_sch_bpund}
\left\|\Lambda_{n, 2}^{0}\right\|_{L_{\infty}(\Omega)} \leq c \left( \left(\frac{h_n}{q_n}\right)^{\tau-d/2} + 1\right),
\end{equation}
and a bound on $\Lambda_{n, 2}^{0}$ would be sufficient in view of \cref{prop:power_and_lagrange}. Nevertheless, the same is not true for infinitely smooth 
kernels, since in this case the lower bound on the smallest eigenvalue and the upper bound on the error have a significant gap (see e.g. 
\cite{Diederichs2017}). Namely, $q_n$ and $h_n$ appear in the right hand side with different exponents, so the upper bound in the last equation is not bounded 
even for $h_n\asymp q_n$. Instead, we can employ the same technique of \cite{DeMarchiSchaback2008} to obtain a similar result in 
the case of regularized interpolation.

\begin{proposition}\label{prop:stability}
Under the same assumptions of the two cases of \cref{th:sampling}, and with the same constants $C$, $C'$, $C''$, we have the following:
\begin{enumerate}[i)]
 \item For finitely smooth kernels it holds
\begin{equation}
\left\|\Lambda_{n, 2}^{\lambda}\right\|_{L_{\infty}(\Omega)} \leq C \left(\frac{h_n^{\tau-d/2}}{\sqrt{\lambda}} + 2\right).
\end{equation}
\item For infinitely smooth kernels it holds
\begin{equation}
\left\|\Lambda_{n, 2}^{\lambda}\right\|_{L_{\infty}(\Omega)} \leq\frac{ 2 e^{-C' / \sqrt{h_n}}}{\sqrt{\lambda}}  + 2 C'' .
\end{equation}
\end{enumerate}
\end{proposition}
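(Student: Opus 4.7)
The plan is to use duality to reduce the claim to a sampling inequality applied to a single function in $\calh$, rather than to each Lagrange function individually. For fixed $x \in \Omega$, I rewrite
\[
\Lambda_{n,2}^\lambda(x) = \|\ell^\lambda(x)\|_2 = \sup_{v\in\mathbb{R}^n,\ \|v\|_2=1} \sum_{j=1}^n v_j\, \ell_j^\lambda(x),
\]
and, for each such $v$, set $u := \sum_{j=1}^n v_j\, \ell_j^\lambda \in V(X_n)$, so that $\Lambda_{n,2}^\lambda(x) = \sup_{\|v\|_2=1} u(x)$. Writing $M := (A+\lambda I)^{-1}$ and using \cref{eq:lagrange}, I see that $u = \sum_i (Mv)_i\, K(\cdot, x_i)$, i.e.\ $u$ is the regularized interpolant of any function with nodal values $v$, as described by \cref{th:representer}.

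For each $v$ with $\|v\|_2 = 1$ I next estimate the two quantities that will appear on the right-hand side of a sampling inequality applied to $u$. The native-space norm expands as $\|u\|_\calh^2 = v^T M A M v$, and since $M$ and $A$ commute, the eigenvalues of $MAM$ are $\sigma_i/(\sigma_i+\lambda)^2$, each bounded by $1/\lambda$; this yields $\|u\|_\calh \leq 1/\sqrt{\lambda}$. The nodal values satisfy $u(x_k) = (A M v)_k = v_k - \lambda (M v)_k$, so the triangle inequality together with $\|M\|_2 \leq 1/\lambda$ gives $\|u_{|X_n}\|_{\ell_\infty(X_n)} \leq \|u_{|X_n}\|_{\ell_2(X_n)} \leq \|v\|_2 + \lambda\|M\|_2\|v\|_2 \leq 2$.

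With these bounds in place I apply \cref{th:sampling} with $|a|=0$ to $u$ and take the supremum over $\|v\|_2=1$ and $x \in \Omega$. The finitely smooth estimate \cref{eq:sampling_finite_pre} gives
\[
|u(x)| \leq \|u\|_{L_\infty(\Omega)} \leq C\left(h_n^{\tau-d/2}\,\|u\|_\calh + \|u_{|X_n}\|_{\ell_\infty(X_n)}\right) \leq C\left(\frac{h_n^{\tau-d/2}}{\sqrt{\lambda}} + 2\right),
\]
uniformly in $v$ and $x$, which proves (i). Running the same argument with \cref{eq:sampling_infinite_pre} in place of \cref{eq:sampling_finite_pre} gives (ii) up to the stated constants.

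The main obstacle is recognizing the first step: a direct approach bounding each Lagrange function $\ell_j^\lambda$ separately and then summing their squares would introduce a spurious factor of $\sqrt{n}$ when passing from $\ell_\infty$ to $\ell_2$, and applying the unregularized estimate \cref{eq:dem_sch_bpund} to $\Lambda_{n,2}^0$ would reintroduce exactly the separation distance $q_n$ that the proposition is designed to avoid. The duality reformulation sidesteps both issues by producing one function $u$ whose $\calh$-norm is automatically controlled by the regularization parameter $\lambda$ through the spectrum of $A+\lambda I$; once this reformulation is in place, the remainder is routine linear algebra combined with the sampling inequalities of \cref{th:sampling}.
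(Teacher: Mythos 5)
Your proof is correct and follows essentially the same route as the paper's: both apply the ``pre''-versions of the sampling inequalities, \cref{eq:sampling_finite_pre} and \cref{eq:sampling_infinite_pre}, to the regularized interpolant of a function whose nodal $\ell_2$-data is normalized, after first bounding the interpolant's native-space norm by $1/\sqrt{\lambda}$ and its own nodal $\ell_2$-values by $2$. The only difference lies in how those two auxiliary bounds are derived: you compute them spectrally from the eigenvalues of $(A+\lambda I)^{-1}A(A+\lambda I)^{-1}$ and $A(A+\lambda I)^{-1}$, whereas the paper obtains them more directly from the variational inequality $J(s_n^{\lambda}(f))\leq J(0)$ for $J(s):=\|s-f\|_{\ell_2(X_n)}^2+\lambda\|s\|_{\calh}^2$, a shorter argument that avoids any diagonalization.
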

\begin{proof}
The regularized interpolant $s_n^{\lambda}(f)$ is a minimizer of $J(s):=\sum_{j=1}^n(s(x_j) -f(x_j))^2 + \lambda \|s\|_{\calh}^2$, so in particular we have
\begin{align*}
\|s_n^{\lambda}(f) - f\|_{\ell_2(X_n)}^2 + \lambda \|s_n^{\lambda}(f)\|_{\calh}^2 = J(s_n^{\lambda}(f))\leq J(0) = \|f\|_{\ell_2(X_n)}^2.
\end{align*}
It follows that $\|s_n^{\lambda}(f)\|_{\calh} \leq \frac1{\sqrt{\lambda}}  \|f\|_{\ell_2(X_n)}$ and, by the triangle inequality, 
\begin{align*}
 \|s_n^{\lambda}(f)\|_{\ell_2(X_n)} \leq  \|s_n^{\lambda}(f) - f\|_{\ell_2(X_n)} +  \|f\|_{\ell_2(X_n)} \leq 2 \|f\|_{\ell_2(X_n)}.
\end{align*}
We can use these two bounds in the sampling inequalities \cref{eq:sampling_finite_pre}, \cref{eq:sampling_infinite_pre} and the fact that 
$\|f\|_{\ell_{\infty}(X_n)}\leq \|f\|_{\ell_{2}(X_n)}$ for any function. In the first case we obtain
\begin{align*}
\|s_n^{\lambda}(f)\|_{L_{\infty}(\Omega)} &\leq C \left( h_n^{\tau-d/2} \|s_n^{\lambda}(f)\|_{\calh} + \|s_n^{\lambda}(f)\|_{\ell_{\infty}(X_n)}\right)\\
&\leq C \left(\frac{h_n^{\tau-d/2}}{\sqrt{\lambda}}  + 2 \right) \|f\|_{\ell_2(X_n)},
\end{align*}
and with the second one to obtain
\begin{align*}
\|s_n^{\lambda}(f)\|_{L_{\infty}(\Omega)} &\leq 2 e^{-C' / \sqrt{h_n}} \|s_n^{\lambda}(f)\|_{\calh} + C'' \|s_n^{\lambda}(f)\|_{\ell_{\infty}(X_n)}\\
&\leq \left(\frac{ 2 e^{-C' / \sqrt{h_n}}}{\sqrt{\lambda}}  + 2 C'' \right) \|f\|_{\ell_2(X_n)}.
\end{align*}
These two bounds give the result using the definition of $\Lambda_{n, 2}^{\lambda}$.
\end{proof}
\begin{remark}
We remark that these bounds do not depend on the separation distance, and the first one provides asymptotically better bounds than \cref{eq:dem_sch_bpund}.

Moreover, although not used in this paper, we remark that the same argument of the last proof, and the fact that $\|f\|_{\ell_{2}(X_n)}\leq 
\sqrt{n}\|f\|_{\ell_{\infty}(X_n)}$, allow to conclude that the standard 
$\ell_{\infty}$ Lebesgue constant of regularized interpolation satisfies
$$
\left\|\Lambda_{n, \infty}^{\lambda}\right\|_{L_{\infty}(\Omega)} \leq C \left(\sqrt{n}\ \frac{ h_n^{\tau-d/2}}{\sqrt{\lambda}} + 
2\right)
$$
in the first case, and similarly in the second one. In particular, this means that the Lebesgue constant is asymptotically bounded for all points $X_n$ 
such that 
$h_n^{\tau-d/2}\leq c\ n^{-1/2}$.
 
\end{remark}

Combining \cref{th:sampling} and \cref{prop:stability} we have the following.
\begin{proposition}\label{prop:decay_power}
In the setting of \cref{th:sampling}, we have the following cases:
\begin{enumerate}[i)]
 \item If $h_n\leq h_0$  it holds 
\begin{align*}
\left\|P_n^{\lambda}\right\|_{L_{\infty}(\Omega)} &\leq C\left( h_n^{\tau-d/2} +  \sqrt{\lambda}\right)\\
\;\;\left\|Q_n^{\lambda}\right\|_{L_{\infty}(\Omega)} &\leq 2 C  h_n^{\tau-d/2} +  \left(3 C  + 1\right) \sqrt{\lambda}.
\end{align*}
\item If $h_n\leq h_0'$  it holds
\begin{align*}
\left\|P_n^{\lambda}\right\|_{L_{\infty}(\Omega)} &\leq 2 e^{-C' / \sqrt{h_n}}+C'' \sqrt{\lambda}\\
\;\;\left\|Q_n^{\lambda}\right\|_{L_{\infty}(\Omega)} &\leq 4 e^{-C' / \sqrt{h_n}}+ \left(3 C'' + 1 \right) \sqrt{\lambda}.
\end{align*}
\end{enumerate}
\end{proposition}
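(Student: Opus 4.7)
The plan is to chain together the three ingredients proved earlier in the section: the sampling inequality \cref{th:sampling}, the stability bound \cref{prop:stability}, and the identity from \cref{prop:power_and_lagrange} that ties $Q_n^\lambda$ to $P_n^\lambda$ and $\Lambda_{n,2}^\lambda$. I would treat the two quantities $P_n^\lambda$ and $Q_n^\lambda$ separately, and each of the two cases (finitely smooth / infinitely smooth) in parallel.

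First I would bound $\|P_n^\lambda\|_{L_\infty(\Omega)}$. This is essentially immediate: the sampling inequality \cref{eq:sampling_finite} of \cref{th:sampling} with $|a|=0$ reads
\[
\|f - s_n^\lambda(f)\|_{L_\infty(\Omega)} \leq C\bigl(h_n^{\tau-d/2} + \sqrt{\lambda}\bigr)\|f\|_{\calh},
\]
and dividing by $\|f\|_\calh$ and taking the supremum over $f\neq 0\in\calh$ produces exactly the stated bound for $P_n^\lambda$ from the definition \cref{eq:Qn}. The infinitely smooth case is identical, using \cref{eq:sampling_infinite} instead.

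Next I would pass from $P_n^\lambda$ to $Q_n^\lambda$ via \cref{prop:power_and_lagrange} applied with $\mu = \lambda$, which gives
\[
Q_n^\lambda(x)^2 \leq P_n^\lambda(x)^2 + \lambda + \lambda\,\Lambda_{n,2}^\lambda(x)^2.
\]
Taking square roots and using $\sqrt{a^2+b^2+c^2}\leq a+b+c$ for non-negative $a,b,c$ yields the pointwise bound
\[
Q_n^\lambda(x) \leq P_n^\lambda(x) + \sqrt{\lambda} + \sqrt{\lambda}\,\Lambda_{n,2}^\lambda(x).
\]
Now I would insert the already-established bound on $P_n^\lambda$ into the first term, and use \cref{prop:stability} on the last term: in the finitely smooth case, multiplying the first estimate of \cref{prop:stability} by $\sqrt{\lambda}$ gives $\sqrt{\lambda}\,\Lambda_{n,2}^\lambda(x)\leq C(h_n^{\tau-d/2}+2\sqrt{\lambda})$. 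Collecting the three contributions produces $2C h_n^{\tau-d/2} + (3C+1)\sqrt{\lambda}$ as claimed. The infinitely smooth case is handled identically, multiplying the second bound of \cref{prop:stability} by $\sqrt{\lambda}$ to obtain $\sqrt{\lambda}\,\Lambda_{n,2}^\lambda(x)\leq 2 e^{-C'/\sqrt{h_n}} + 2C''\sqrt{\lambda}$, and combining.

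There is no real obstacle here beyond bookkeeping: the only subtle point is the choice $\mu=\lambda$ in \cref{prop:power_and_lagrange}, which is the one matched by the available stability estimate in \cref{prop:stability}; choosing instead $\mu=0$ would force one to control $\Lambda_{n,2}^0$, for which only the suboptimal bound \cref{eq:dem_sch_bpund} depending on the separation distance $q_n$ is available, and the result would then not be in terms of $h_n$ alone. All other steps are direct substitutions, so the constants propagate in a routine way.
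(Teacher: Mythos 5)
Your proof is correct and follows essentially the same route as the paper's: apply the sampling inequalities at $|a|=0$ for $P_n^\lambda$, then chain \cref{prop:power_and_lagrange} with $\mu=\lambda$ together with \cref{prop:stability}. The only difference is that you make explicit the elementary inequality $\sqrt{a^2+b^2+c^2}\leq a+b+c$ behind passing from the squared relation to the linear one, which the paper uses without comment; the constants work out identically.
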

\begin{proof}
The bounds on $P_n^{\lambda}$ are just the application of the bounds \cref{eq:sampling_finite} and \cref{eq:sampling_infinite} with $a = 0^T$ to the definition 
\cref{eq:Qn} of $P_n^{\lambda}$. Moreover, from \cref{prop:power_and_lagrange} with $\mu=\lambda$ we have
\begin{align*}
\left\|Q_n^{\lambda}(x)\right\|_{L_{\infty}(\Omega)} \leq \left\|P_n^{\lambda}\right\|_{L_{\infty}(\Omega)} + \sqrt{\lambda}\left(1 +  \left\|\Lambda_{n, 
2}^{\lambda}\right\|_{L_{\infty}(\Omega)}\right),
\end{align*}
and we can use the bounds on $\left\|P_n^{\lambda}\right\|_{L_{\infty}(\Omega)}$ and \cref{prop:stability}.

In the first case we obtain

\begin{align*}
\left\|Q_n^{\lambda}(x)\right\|_{L_{\infty}(\Omega)} &\leq C \left(h_n^{\tau - d/2} + \sqrt{\lambda}\right) + 
 \sqrt{\lambda}\left(1 + C \left(\frac{h_n^{\tau-d/2}}{\sqrt{\lambda}} + 2\right)\right)\\
&\leq 2 C  h_n^{\tau-d/2} +  \left(3 C  + 1\right) \sqrt{\lambda},
\end{align*}

while in the second one it holds
\begin{align*}
\left\|Q_n^{\lambda}(x)\right\|_{L_{\infty}(\Omega)} &\leq 2 e^{-C' / \sqrt{h_n}}+C'' \sqrt{\lambda} + \sqrt{\lambda}\left(1 + \frac{ 2 e^{-C' / 
\sqrt{h_n}}}{\sqrt{\lambda}}  + 2 C''\right)\\
&\leq 4 e^{-C' / \sqrt{h_n}}+ \left(3 C'' + 1 \right) \sqrt{\lambda}.
\end{align*}

\end{proof}

\section{Greedy selection rules and convergence}\label{sec:greedy_selections}
Using \cref{prop:residual} and \cref{prop:pf_relation} we can define two selection rules which generalize the $f$- and $P$-greedy selections of 
interpolation as follows: The regularized version of $f$-greedy is defined by selecting the new point $x_n$ as
$$
x_{n}:=\argmax_{x\in \Omega_h} |r_{n-1}(x)|.
$$
This selection can be performed efficiently thanks to the update rule 
\cref{eq:update}, and it holds $r_{n}(x_k) = 0$ for $1\leq k\leq n$, so no point is selected more than once.

The regularized version of $P$-greedy, instead, selects 
$$
x_{n}:=\argmax_{x\in \Omega_h} Q_{n-1}^{\lambda}(x),
$$
which is just the standard $P$-greedy selection, but applied to the kernel $K_{\lambda}$. In particular $Q_n^{\lambda}(x)=0$ for $x\in X_n$, so again no point is 
selected more than once, and the power function can be updated efficiently using \cref{eq:power_update}. Moreover, thanks to Proposition 
\cref{prop:pf_relation} any upper bound on $\|Q_n^{\lambda}\|_{L_{\infty}(\Omega)}$ provides an upper bound on $\|P_n^{\lambda}\|_{L_{\infty}(\Omega)}$, so it makes 
sense 
to select points to minimize $Q_n^{\lambda}$ in order to minimize $P_n^{\lambda}$. 


We remark that both selection strategies are well defined also for $K$ positive definite if $\lambda>0$, and they are nothing but the standard $f$- and 
$P$-greedy selections applied to $K_{\lambda}$. In particular, the selection of the points and the construction of the regularized interpolants can be obtained just by 
running VKOGA with kernel $K_{\lambda}$, and replacing $K_{\lambda}$ with $K$ after the computation to obtain the desired regularized interpolant.


\subsection{Convergence rates for $P$-greedy selection}\label{sec:convergence}
We can now prove rates of convergence for the new $P$-greedy selection rule. In particular, we prove that $n$ points selected by this criterion and $n$ 
optimally chosen points give power functions such that $\left\|Q_n^{\lambda}\right\|_{L_{\infty}(\Omega)}$ decays with the same rate.

The result is obtained by applying the theory of \cite{SH16b}, which holds for the power function of a strictly positive definite kernel. We refer to this 
paper for the details of the proof. The idea is the following: If there exists a certain placement of $n$ points such that the corresponding power function has 
a given decay rate in terms of $n$, then $n$ points selected by the $P$-greedy algorithm give, up to constants, a power function with the same decay.

To apply this result here, we need first a decay rate on $\left\|Q_n^{\lambda}\right\|_{L_{\infty}(\Omega)}$ in terms of $n$, and this is obtained by a 
standard technique. Indeed, since the bounds of both \cref{th:sampling} and \cref{prop:decay_power} hold for any 
$X_n$ provided $h_n$ is small enough, one can choose in particular a 
sequence 
$\{X_n\}_{n\in\N}$ of quasi uniform points, i.e., such that there exists a uniformity constant $\gamma>1$ such that
$$
h_n\leq \gamma q_n,\; n\in \N,
$$
and this can be shown to imply the existence of a constant $C_{\Omega, \gamma}$ such that
$$
h_n \leq C_{\Omega, \gamma} n^{-1/d},\; n\in \N.
$$
Combining this observation with \cref{prop:decay_power} we immediately obtain the following.

\begin{proposition}\label{prop:conv}
Assume the hypotheses of \cref{th:sampling} hold, and let $\{X_n\}_{n\in\N}\subset\Omega$ be a sequence of quasi uniform points with 
uniformity constant $\gamma>1$. Then the following hold. 
\begin{enumerate}[i)]
 \item\label{iiii} For any $n\in\N$ with $C_{\Omega, \gamma} n^{-1/d}\leq h_0$ we have
\begin{align}\label{eq:decay_power_n}
\left\|Q_n^{\lambda}\right\|_{L_{\infty}(\Omega)} 
&\leq C_0  \left(n^{-\tau/d +1/2} +   \sqrt{\lambda}\right),
\end{align}
with $C_0:=\max\left(2 C C_{\Omega, \gamma}^{\tau-d/2}, 3 C  + 1\right)$.
 \item For any $n\in\N$ with $C_{\Omega, \gamma} n^{-1/d}\leq h_0'$ we have
\begin{align}
\left\|Q_n^{\lambda}\right\|_{L_{\infty}(\Omega)} 
&\leq C'_0\left(e^{-c'_0 n^{-1/2d}}+ \sqrt{\lambda}\right),
\end{align}
with $C'_0:=\max\left(4, 3 C'' + 1\right)$, $c'_0:=C'C_{\Omega, \gamma}^{-1/2}$.
\end{enumerate}
\end{proposition}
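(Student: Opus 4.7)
The proposition is essentially a corollary of \cref{prop:decay_power}: the only additional ingredient is the standard fact that a quasi uniform sequence $\{X_n\}_{n\in\N}\subset\Omega$ satisfies $h_n \leq C_{\Omega,\gamma}\, n^{-1/d}$, and the remainder of the argument is just algebraic rearrangement. My plan is therefore to first recall this fill distance bound and then substitute it term by term into the two cases of \cref{prop:decay_power}, collecting the constants.

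For the fill distance estimate, I would run the classical volume argument: the open balls $B(x_i,q_n)$ centered at the points of $X_n$ are pairwise disjoint and contained in a fixed $q_n$-enlargement of $\Omega$, so $n\cdot c_d\, q_n^d$ is bounded by a constant depending only on $\Omega$, giving $q_n \leq c\, n^{-1/d}$. The quasi uniformity hypothesis $h_n \leq \gamma\, q_n$ transfers this to $h_n$, with $C_{\Omega,\gamma}$ absorbing $\gamma$ and the geometric constants.

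Once this estimate is available, for $n$ so large that $C_{\Omega,\gamma} n^{-1/d} \leq h_0$ I would insert it into the first bound of \cref{prop:decay_power}, using monotonicity of $t\mapsto t^{\tau-d/2}$ (valid since $\tau>d/2$), to obtain
\[
\|Q_n^{\lambda}\|_{L_\infty(\Omega)} \leq 2C\bigl(C_{\Omega,\gamma}\,n^{-1/d}\bigr)^{\tau-d/2} + (3C+1)\sqrt{\lambda}.
\]
Writing the $n$-exponent as $-(\tau-d/2)/d = -\tau/d + 1/2$ and pulling out the common prefactor $C_0 := \max\bigl\{2 C\, C_{\Omega,\gamma}^{\tau-d/2},\; 3C+1\bigr\}$ then reproduces \cref{eq:decay_power_n}. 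For the infinitely smooth case, under $C_{\Omega,\gamma} n^{-1/d} \leq h_0'$ the same substitution gives $1/\sqrt{h_n} \geq C_{\Omega,\gamma}^{-1/2}\, n^{1/(2d)}$, which plugged into the second bound of \cref{prop:decay_power} yields
\[
\|Q_n^{\lambda}\|_{L_\infty(\Omega)} \leq 4\exp\bigl(-c_0'\, n^{1/(2d)}\bigr) + (3C''+1)\sqrt{\lambda}
\]
with $c_0' := C'\, C_{\Omega,\gamma}^{-1/2}$, and absorbing both coefficients into $C_0' := \max\{4,\, 3C''+1\}$ gives the stated inequality.

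There is essentially no obstacle here beyond bookkeeping: the only nontrivial ingredient is the classical volume bound on $q_n$ (and hence $h_n$) for quasi uniform samples, and the main care lies in tracking how the constants $C$, $C'$, $C''$ from \cref{th:sampling} propagate through \cref{prop:decay_power} into the final prefactors $C_0, C_0', c_0'$.
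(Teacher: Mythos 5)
Your proof is correct and follows exactly the route the paper takes: the paper only sketches it as an immediate corollary of \cref{prop:decay_power}, asserting the fill-distance bound $h_n \leq C_{\Omega,\gamma} n^{-1/d}$ for quasi-uniform sequences and then substituting; you flesh out the volume argument for that bound and carry out the substitution explicitly, which is just what was left implicit.

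One remark worth recording: your derivation in the infinitely smooth case correctly produces the exponent $n^{1/(2d)}$, i.e.\ $e^{-c_0' n^{1/(2d)}}$, since $h_n \leq C_{\Omega,\gamma} n^{-1/d}$ gives $1/\sqrt{h_n} \geq C_{\Omega,\gamma}^{-1/2} n^{1/(2d)}$. The proposition as printed reads $e^{-c_0' n^{-1/2d}}$, which would tend to a nonzero constant as $n\to\infty$ and therefore cannot be a decay bound; this appears to be a sign typo in the paper (propagated to \cref{th:final} as well), and your computation gives the intended, correct form.
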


Second, the fact that a given decay rate is carried over to the decay rate of the greedy selected points is proven in \cite{SH16b} by using the results of 
\cite{DeVore2013}. To do so, we first prove the following slight generalization of \cite[Corollary 3.3]{DeVore2013} in order to deal with the present case, 
where the convergence is to $\sqrt{\lambda}$, and not to zero. The proof is postponed to Appendix 
\cref{appendix} as it is mainly unrelated to the content of this paper.

\begin{proposition}\label{prop:DeVore_generalization}
Let $\calh$ be a Hilbert space, $\mathcal V\subset\calh$ a subset, and 
\begin{align*}
d_n(\mathcal V, \ns) : = \inf_{\stackrel{V_n\subset\ns}{\dim(V_n) = n}} \sup_{f\in \mathcal V} 
\|f - \Pi_{V_n}(f)\| 
\end{align*}
be the Kolmogorov width of $\mathcal V$ in $\calh$. Let $\sigma_n:= \sup_{f\in \mathcal  V} \|f - \Pi_{\bar V_n}(f)\| $, where $\bar V_n$ is selected by the 
greedy algorithm of \cite{DeVore2013}. Then
\begin{enumerate}[i)]
\item\label{item:label1} $\sigma_n\leq \sqrt{2 \sigma_0} \min\limits_{1\leq m<n} d_m^{\frac{n-m}{n}}$ for all $n\in\N$.
\item\label{item:label2} If there are constants $C_0, \eta>0$ such that $d_n\leq C_0 (n^{-\alpha} + \eta)$ for all $n\in\N$, then 
$\sigma_n\leq C_1(n^{-\alpha} + \eta)\; \fa \; n\in\N$, with $C_1:=2^{1+5\alpha} C_0$.
 \item\label{item:label3} If there are constants $c_0, C_0, \eta>0$ such that $d_n\leq C_0 \left( e^{-c_0 n^{-\alpha}} + \eta\right)$ for all $n\in\N$, then 
$\sigma_n\leq C_1 \left( e^{-c_1 n^{-\alpha}} + \eta\right)\; \fa \; n\in\N$, with $C_1:=\sqrt{2 C_0 \sigma_0}$, $c_1:=2^{-1-2\alpha} c_0$.
\end{enumerate}
\end{proposition}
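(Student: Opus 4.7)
Plan. Part (i) is essentially Corollary 3.3 of \cite{DeVore2013} transcribed to the present setting, and the proof there carries over verbatim: the argument uses only Hilbert space geometry (orthogonal projections, the greedy selection rule, and a submultiplicative inequality between greedy errors $\sigma_i$ and Kolmogorov widths $d_i$). Concretely, I would establish a product-form bound of the type $\prod_{i=1}^{n}\sigma_i^2 \leq 2\sigma_0^2 \prod_{i=1}^{n-1} d_i^2$, then split the product at an arbitrary $1\leq m<n$ and use the monotonicity $\sigma_n \leq \sigma_i$ for $i\leq n$ to arrive at $\sigma_n \leq \sqrt{2\sigma_0}\,d_m^{(n-m)/n}$.

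For parts (ii) and (iii), the strategy is to apply (i) with a judicious $m=m(n)$ and decouple the ``decay'' part ($n^{-\alpha}$ or $e^{-c_0 n^\alpha}$) from the additive $\eta$. Using $(a+b)^p \leq a^p + b^p$ for $p = (n-m)/n \in (0,1]$, the bound of (i) becomes $\sigma_n \leq \sqrt{2\sigma_0}\, C_0^{(n-m)/n}\bigl(\phi(m)^{(n-m)/n} + \eta^{(n-m)/n}\bigr)$, where $\phi(m)$ denotes $m^{-\alpha}$ in case (ii) and $e^{-c_0 m^\alpha}$ in case (iii). I would then split into two regimes: when the decay term already dominates $\eta$, choose $m$ as a suitable fraction of $n$, mimicking the classical DeVore argument, to produce the claimed rate; when $\eta$ dominates, use monotonicity of $\sigma_n$ together with (i) applied at an index $m$ large enough so that $d_m \leq 2 C_0\eta$, to conclude $\sigma_n \lesssim \eta$. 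Combining the two regimes yields $\sigma_n \leq C_1(\phi(n) + \eta)$ with the constants stated in the proposition.

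The specific shape of the constants ($C_1 = 2^{1+5\alpha}C_0$ in (ii), and $c_1 = 2^{-1-2\alpha} c_0$, $C_1 = \sqrt{2C_0 \sigma_0}$ in (iii)) emerges from the optimization of $m$. For instance, in (iii), taking $m \asymp n$ gives $(n-m)/n \approx 1/2$ while $m^\alpha \approx 2^{-\alpha} n^\alpha$, yielding an exponent roughly $2^{-1-\alpha}$ times $c_0 n^\alpha$, the extra factor accounting for the loss in raising to a non-integer power. The main obstacle, as typical in this kind of result, is the constant bookkeeping: the additive $\eta$ term does not fit naturally into the multiplicative structure of part (i), and the regime split together with the $(a+b)^p$ inequality must be executed carefully to produce the explicit constants claimed; conceptually, however, no new idea beyond the abstract DeVore machinery is required.
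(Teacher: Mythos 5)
Your outline of parts (i) and (iii) matches the paper: both are taken over from Corollary 3.3 of \cite{DeVore2013}, with (iii) obtained by applying (i) with $m\approx n/2$ (the square root of an exponential is still an exponential, so only the constants change). One caveat: the key inequality is not a product bound of the form $\prod_{i\le n}\sigma_i^2\le 2\sigma_0^2\prod_{i<n}d_i^2$; it is DeVore's Theorem 3.2, $\prod_{i=1}^{K}\sigma_{N+i}^2\le\left(\tfrac{K}{m}\right)^m\left(\tfrac{K}{K-m}\right)^{K-m}\sigma_{N+1}^{2m}d_m^{2K-2m}$, which involves a \emph{single} width $d_m$ and a flexible starting index $N$. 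This distinction matters below.

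The genuine gap is in part (ii). Applying (i) with any choice of $m=m(n)$ cannot produce the rate $n^{-\alpha}$: the bound is $\sigma_n\lesssim d_m^{(n-m)/n}$, and with $m=\theta n$ this gives at best $n^{-\alpha(1-\theta)}$ (a strictly worse polynomial rate for fixed $\theta$), while letting $\theta\to 0$ with $n$ blows up the constant or costs a logarithmic factor. This is precisely why DeVore's proof of the polynomial case does \emph{not} go through (i). The paper instead takes $N=K=n$, $m=s$, $n=2s$ in Theorem 3.2 to get the recursive ``doubling'' inequality $\sigma_{4s}\le\sqrt{2}\,\sqrt{\sigma_{2s}\,d_s}$, and then runs a minimal-counterexample induction: assuming $M$ is the first index with $\sigma_M>C_1(M^{-\alpha}+\eta)$, the inductive bound on $\sigma_{2s}$ is fed back into the right-hand side, so the square-root loss is absorbed by the recursion rather than degrading the exponent. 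The additive $\eta$ is then handled not by a regime split but by showing that the resulting constraint $C_1<f(s)$ involves a decreasing function $f$, whence $C_1<f(0)=2^{1+5\alpha}C_0$ gives the contradiction (with the cases $M=4s+q$, $q\in\{1,2,3\}$, treated by monotonicity of $\sigma_n$). Your regime-splitting idea for $\eta$ could be grafted onto this recursive argument, but as attached to a single application of (i) the plan for (ii) fails, and the failure is structural, not a matter of constant bookkeeping.
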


Using \cref{prop:conv}, \cref{prop:DeVore_generalization}, and \cite{SH16b}, we finally obtain the following.

\begin{theorem}\label{th:final}
Assume the hypotheses of \cref{th:sampling} hold, and let $\{X_n\}_{n\in\N}\subset\Omega$ be a sequence of points selected by the regularized 
$P$-greedy algorithm. Then the following hold. 
\begin{enumerate}[i)]
 \item\label{iiii} For any $n\in\N$ with $C_{\Omega, \gamma} n^{-1/d}\leq h_0$ we have
\begin{align}\label{eq:decay_power_n}
\left\|Q_n^{\lambda}\right\|_{L_{\infty}(\Omega)} 
&\leq C_1  \left(n^{-\tau/d +1/2} +   \sqrt{\lambda}\right),
\end{align}
with $C_1:=2^{1+5\alpha} C_0$ and $C_0$ as in \cref{prop:conv}.
 \item For any $n\in\N$ with $C_{\Omega, \gamma} n^{-1/d}\leq h_0'$ we have
\begin{align}
\left\|Q_n^{\lambda}\right\|_{L_{\infty}(\Omega)} 
&\leq C'_0\left(e^{-c'_0 n^{-1/2d}}+ \sqrt{\lambda}\right),
\end{align}
with $C_1:=\sqrt{2 C_0' \sqrt{K_{\lambda}(x,x)}   }$, $c_1:=2^{-1-2\alpha} c_0'$ and $C'_0$, $c'_0$ as in \cref{prop:conv}
\end{enumerate}
\end{theorem}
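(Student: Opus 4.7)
The plan is to reduce the problem to the setting of \cite{SH16b} applied to the strictly positive definite kernel $K_\lambda$, and then apply the generalized DeVore-type bound of \cref{prop:DeVore_generalization} with $\eta = \sqrt\lambda$.

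First, I would note that by \cref{item1} of \cref{prop:hl} the kernel $K_\lambda$ is strictly positive definite, and the regularized $P$-greedy rule on $K$ is literally the standard $P$-greedy rule applied to $K_\lambda$: at each step the selection is $x_n = \operatorname{argmax}_{x\in\Omega_h} Q^\lambda_{n-1}(x)$, where $Q^\lambda_{n-1}$ is the interpolatory power function for $K_\lambda$. This means that the whole analysis of \cite{SH16b} becomes available in the $\calh^\lambda$-setting. In particular, I recall that the key identity there is $\|Q^\lambda_n\|_{L_\infty(\Omega)}^2 = \sup_{x\in\Omega} \|K_\lambda(\cdot,x)-\Pi_{V(X_n)}K_\lambda(\cdot,x)\|_{\calh^\lambda}^2$, i.e.\ the squared power function equals the worst-case residual of the orthogonal projection onto $V(X_n)$ of the set $\mathcal V := \{K_\lambda(\cdot,x):x\in\Omega\}$. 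This identifies $\sigma_n := \|Q^\lambda_n\|_{L_\infty(\Omega)}$ with the greedy approximation error for $\mathcal V$ in $\calh^\lambda$ in the sense of \cref{prop:DeVore_generalization}, and the Kolmogorov $n$-width $d_n(\mathcal V,\calh^\lambda)$ is then the benchmark.

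Second, I would bound $d_n$ from above by choosing a quasi-uniform sequence $\{X_n\}_{n\in\N}\subset\Omega$ with uniformity constant $\gamma>1$, so that $h_n\leq C_{\Omega,\gamma} n^{-1/d}$. Taking $V_n = V(X_n)$ as a candidate $n$-dimensional subspace in the definition of $d_n$, the identity above gives $d_n(\mathcal V,\calh^\lambda) \leq \|Q^\lambda_n\|_{L_\infty(\Omega)}$ for that particular point set, and now \cref{prop:conv} provides precisely the upper bounds $d_n \leq C_0(n^{-\tau/d+1/2} + \sqrt\lambda)$ and $d_n\leq C_0'(e^{-c_0'n^{-1/2d}} + \sqrt\lambda)$ in the two regimes. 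This is the step where the freedom in choosing the reference points enters: quasi-uniform points are admissible competitors in the width, and the regularized sampling inequalities translate their fill-distance bounds into width bounds.

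Third, I would feed these width bounds into \cref{prop:DeVore_generalization}: item \ref{item:label2} with $\alpha = \tau/d - 1/2$ and $\eta = \sqrt\lambda$ yields $\sigma_n \leq 2^{1+5\alpha}C_0(n^{-\tau/d+1/2} + \sqrt\lambda)$, giving the constant $C_1 = 2^{1+5\alpha}C_0$ in case \ref{iiii}; item \ref{item:label3} with $\alpha = 1/2d$ and $\eta=\sqrt\lambda$ yields $\sigma_n\leq \sqrt{2C_0'\sigma_0}(e^{-c_1 n^{-1/2d}} + \sqrt\lambda)$ with $c_1 = 2^{-1-2\alpha}c_0'$, and since $\sigma_0 = \sup_x\|K_\lambda(\cdot,x)\|_{\calh^\lambda} = \sup_x\sqrt{K_\lambda(x,x)}$, this reproduces exactly the stated constants. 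The translation from $\sigma_n$ back to $\|Q_n^\lambda\|_{L_\infty(\Omega)}$ is the tautological identification made in the first paragraph.

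The main obstacle, and the only non-automatic point, is verifying that the greedy framework of \cite{DeVore2013}/\cite{SH16b} applies verbatim to $\mathcal V$ with $K_\lambda$ in place of $K$: one must check that maximizing $Q^\lambda_{n-1}(x)$ over $\Omega_h$ is the weak-greedy step in $\calh^\lambda$ for the dictionary $\mathcal V$ (which follows from $Q^\lambda_{n-1}(x)^2 = \|K_\lambda(\cdot,x) - \Pi_{V(X_{n-1})}K_\lambda(\cdot,x)\|_{\calh^\lambda}^2$), and that the resulting greedy subspace $\bar V_n$ coincides with $V(X_n)$. Both facts are inherited directly from the strictly positive definite setting, so no new argument is needed beyond stating that the analysis of \cite{SH16b} goes through unchanged modulo the substitution $K\rightsquigarrow K_\lambda$ and the use of \cref{prop:DeVore_generalization} in place of \cite[Corollary 3.3]{DeVore2013}.
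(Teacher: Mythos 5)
Your proposal is correct and follows exactly the route the paper intends: identify the regularized $P$-greedy algorithm with the abstract greedy algorithm of \cite{DeVore2013,SH16b} for the dictionary $\{K_\lambda(\cdot,x):x\in\Omega\}$ in $\calh^\lambda$, bound the Kolmogorov width via \cref{prop:conv} applied to quasi-uniform competitor points, and transfer the rate to $\sigma_n=\|Q_n^\lambda\|_{L_\infty(\Omega)}$ through \cref{prop:DeVore_generalization} with $\eta=\sqrt\lambda$. The only cosmetic slip is writing $\Pi_{V(X_n)}$ where the projection is onto the span of $\{K_\lambda(\cdot,x_i)\}$ in $\calh^\lambda$ rather than onto $V(X_n)=\Sp{K(\cdot,x_i)}$, but the constants, exponents, and the identification $\sigma_0=\sup_x\sqrt{K_\lambda(x,x)}$ are all as in the paper.
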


\section{Experiments}\label{sec:experiments}

We conclude this paper by demonstrating the decay rates of the power function for translational invariant kernels of different smoothness.
We remark that the $f$-greedy variant of the algorithm has been recently used to construct a data-based surrogate from simulation data in \cite{KSHH2017}. We point to 
this paper for a practical application scenario.

In the following, we use the Wendland kernel $W_{4, d}$ (see \cite{Wendland1995a}) and the Gaussian kernel $G(x, y):= \exp(\varepsilon^2 \|x-y\|^2)$, which are 
respectively members of the two classes of kernels considered in \cref{th:final}. In both cases, the shape parameter is fixed to $\varepsilon=1$.  The set 
$\Omega$ is the unit ball in $\R^2$, which is represented by a discretization $\Omega_h$ obtained by restricting a regular grid in $[-1,1]^2$ to $\Omega$, so that the 
number of points is $N\approx 20000$.  Both the greedy selection and the computation of the $L_{\infty}(\Omega)$ norms are performed on this set.

For both kernels, we compare the decay of the standard power function (i.e., $\lambda = 0$) with $Q_n^{\lambda}$ for $\lambda = 10^{-14}, 10^{-12}, \dots, 10^{-6}$.

The  $P$-greedy algorithm is stopped when the maximum of the power function is below the tolerance of $10^{-16}$ (which happens only for $\lambda=0$) or when 
$n=1000$ points are selected.

The results are in \cref{fig:decay} for $W_{4,d}$ (left) and $G$ (right). For each value of $\lambda$, we compute the minimal coefficient $c$ such that 
$$
\left\|Q_n^{\lambda}\right\|_{L_{\infty}(\Omega_h)} \leq c_{\lambda} \left(\|P_n\|_{L_{\infty}(\Omega_h)} + \sqrt{\lambda}\right).
$$
The plots show both the decay of the power 
functions (in solid lines) and the curves $c_{\lambda} \left(\|P_n\|_{L_{\infty}(\Omega_h)} + \sqrt{\lambda}\right)$. Observe that, in the case $\lambda=0$, the 
algorithm stops much earlier, say at $n<1000$, so the dotted curves are limited to the first $n$ iterations. The computed coefficients are in \cref{tab:coeffs}.

\begin{table}
 \begin{tabular}{|l|c|c|c|c|c|}
  \hline
  &$\lambda = 10^{-14}$&  $\lambda = 10^{-12}$&$\lambda = 10^{-10}$&$\lambda = 10^{-8}$&$\lambda = 10^{-6}$\\
\hline
  Wendland & 1.00 &  1.00 & 1.02 &  1.12 &   1.07\\
  \hline
  Gaussian &1.36 & 1.48 & 1.46 & 1.54 & 1.44 \\
  \hline
 \end{tabular}
\caption{Coefficients $c_{\lambda}$ relating the power functions $P_n$ and $Q_n^{\lambda}$ as described in \cref{sec:experiments}.}
\label{tab:coeffs}
\end{table}

\begin{figure}
\begin{tabular}{cc}
\includegraphics[width=0.5\textwidth]{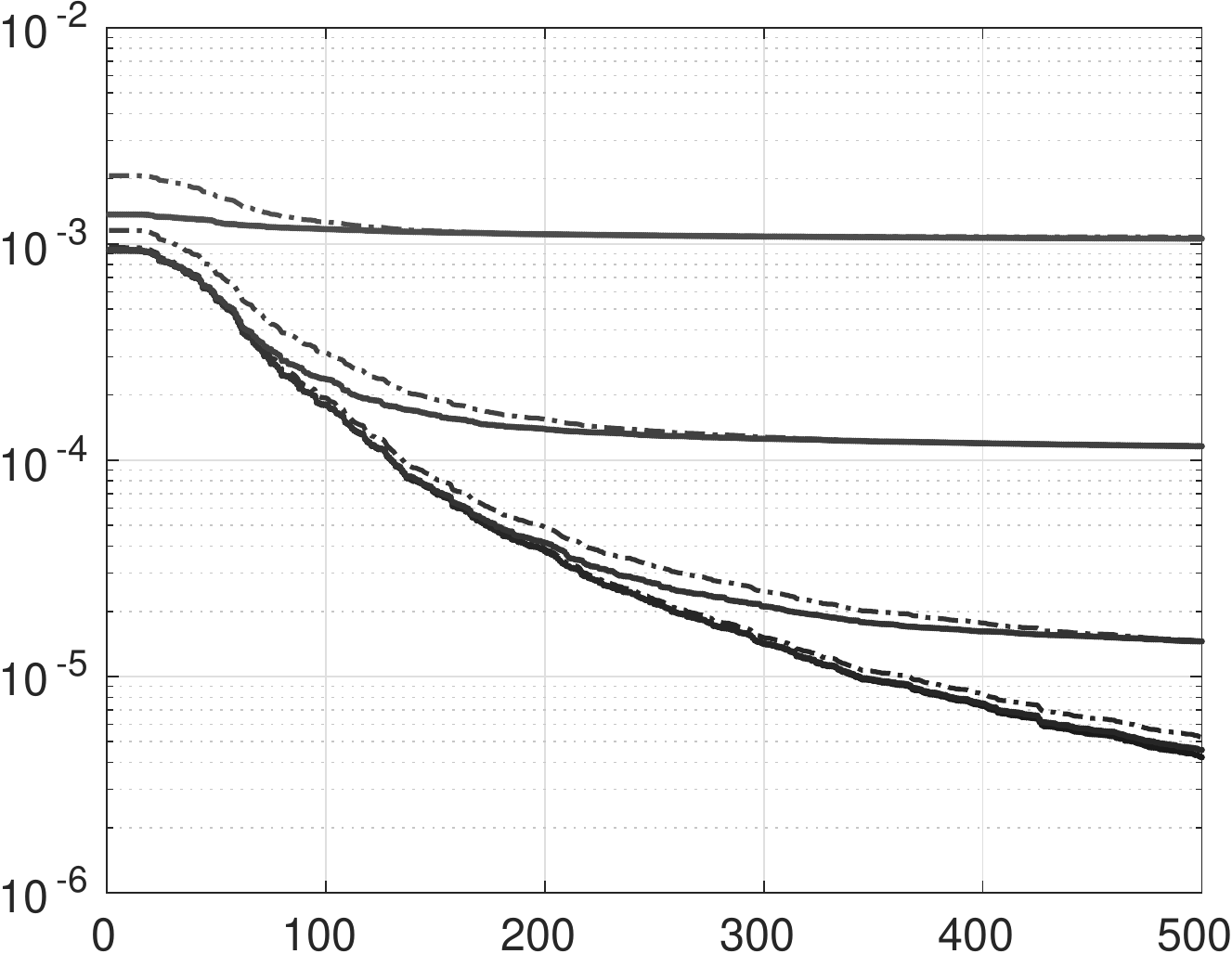}&
\includegraphics[width=0.5\textwidth]{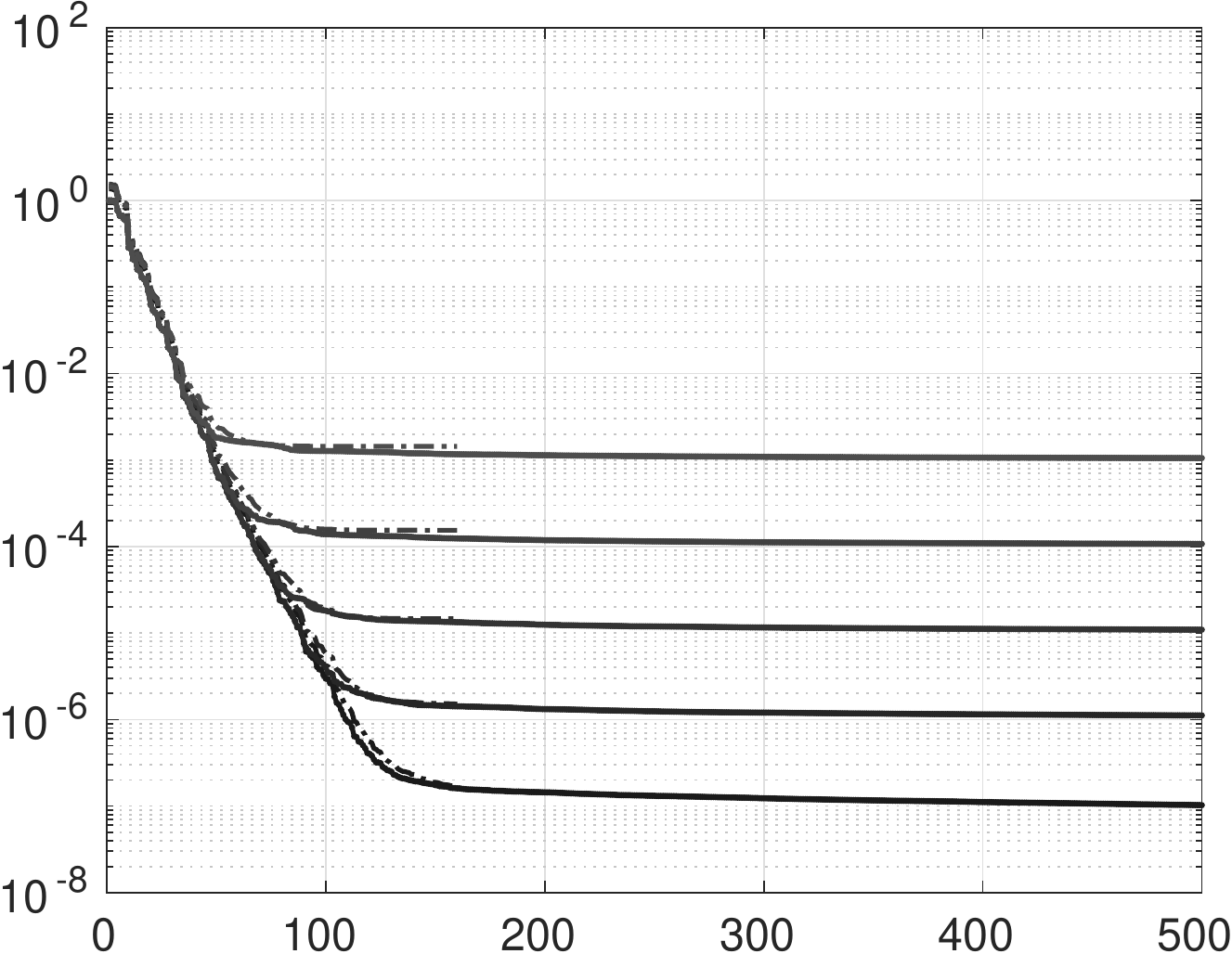}
\end{tabular}
\caption{Decay of the supremum norm of the regularized power functions $Q_n^{\lambda}$ (solid lines) obtained with the $P$-greedy algorithm for different values of the 
regularization parameters, for the Wendland (left) and Gaussian kernel (right). The dotted lines show the curves $c_{\lambda} \left(\|P_n\|_{L_{\infty}(\Omega_h)} + 
\sqrt{\lambda}\right)$, where $c_{\lambda}$ is computed as in \cref{sec:experiments}.}\label{fig:decay}
\end{figure}
Both kernels confirm the expected decay rate of \cref{th:final}, and indeed the numerically computed constant, at least for this very particular setting, seem to 
be very small.

\section*{Acknowledgements}  The authors would like to thank the German Research Foundation (DFG) for financial support within the Cluster of Excellence in
Simulation Technology (EXC 310/2) at the University of Stuttgart.

\appendix
\section{Proof of \cref{prop:DeVore_generalization}}\label{appendix}
\begin{proposition}
Let $\calh$ be a Hilbert space, $\mathcal V\subset\calh$ a subset, and 
\begin{align*}
d_n(\mathcal V, \ns) : = \inf_{\stackrel{V_n\subset\ns}{\dim(V_n) = n}} \sup_{f\in \mathcal V} 
\|f - \Pi_{V_n}(f)\| 
\end{align*}
be the Kolmogorov width of $\mathcal V$ in $\calh$. Let $\sigma_n:= \sup_{f\in \mathcal  V} \|f - \Pi_{\bar V_n}(f)\| $, where $\bar V_n$ is selected by the 
greedy algorithm of \cite{DeVore2013}. Then
\begin{enumerate}[i)]
\item\label{item:label1} $\sigma_n\leq \sqrt{2 \sigma_0} \min\limits_{1\leq m<n} d_m^{\frac{n-m}{n}}$ for all $n\in\N$.
\item\label{item:label2} If there are constants $C_0, \eta>0$ such that $d_n\leq C_0 (n^{-\alpha} + \eta)$ for all $n\in\N$, then 
$\sigma_n\leq C_1(n^{-\alpha} + \eta)\; \fa \; n\in\N$, with $C_1:=2^{1+5\alpha} C_0$.
 \item\label{item:label3} If there are constants $c_0, C_0, \eta>0$ such that $d_n\leq C_0 \left( e^{-c_0 n^{-\alpha}} + \eta\right)$ for all $n\in\N$, then 
$\sigma_n\leq C_1 \left( e^{-c_1 n^{-\alpha}} + \eta\right)\; \fa \; n\in\N$, with $C_1:=\sqrt{2 C_0 \sigma_0}$, $c_1:=2^{-1-2\alpha} c_0$.
\end{enumerate}
\end{proposition}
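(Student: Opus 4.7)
The strategy is to follow the proof of Corollary~3.3 of \cite{DeVore2013}, modifying it to accommodate the additive constant $\eta$. Part~\ref{item:label1} is the Hilbert space greedy inequality from that reference; one proves it by iterating the Hilbert space estimate $\sigma_{K+m}^2 \leq 2\sigma_K d_m$ for the orthogonal greedy algorithm, upper-bounding $\sigma_K \leq \sigma_0$, and taking the minimum over $1\leq m<n$. I would either cite this directly or reproduce the short iteration argument in a few lines.

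For parts~\ref{item:label2} and \ref{item:label3}, the naive approach of plugging the hypothesis on $d_m$ straight into part~\ref{item:label1} with $m=\lfloor n/2\rfloor$ is insufficient: by subadditivity of $x\mapsto x^\beta$ on $[0,1]$ with $\beta=(n-m)/n\approx 1/2$, the additive term $\eta$ inside $d_m$ only contributes $\sqrt{\eta}$, which is weaker than the target $\eta$. The same defect appears in the exponential case. I would therefore use a two-regime thresholding argument. Let $n^\star$ be the smallest index for which the decay term drops below $\eta$: in the polynomial case $n^\star\sim\eta^{-1/\alpha}$, in the exponential case $n^\star\sim(\log(1/\eta)/c_0)^{1/\alpha}$ (reading the exponent in part~\ref{item:label3} as $-c_0 n^{\alpha}$, which is the sign needed to match \cref{prop:conv}). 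For $n\leq n^\star$ one has $d_m\leq 2 C_0\,\mathrm{decay}_m$ uniformly in $m\leq n$, so the unmodified Corollary~3.3 of \cite{DeVore2013} applies with $C_0$ replaced by $2C_0$, giving $\sigma_n\leq C_1\,\mathrm{decay}_n$. For $n>n^\star$ the monotone decrease of $\sigma_n$ together with the previous bound give $\sigma_n\leq\sigma_{n^\star}\leq C_1\eta$. Combining the two regimes produces the desired $\sigma_n\leq C_1(\mathrm{decay}_n+\eta)$ for all $n$.

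The principal obstacle is exactly the subadditivity issue above: applying part~\ref{item:label1} directly cannot distinguish the decaying part of the bound on $d_m$ from the constant $\eta$, and inevitably degrades $\eta$ into $\sqrt{\eta}$. The thresholding step, which never raises $\eta$ to a fractional power, together with the monotonicity of $\sigma_n$, is what preserves $\eta$ linearly. Recovering the explicit constants $C_1=2^{1+5\alpha}C_0$ in (ii) and $C_1=\sqrt{2 C_0\sigma_0}$, $c_1=2^{-1-2\alpha}c_0$ in (iii) is then a bookkeeping exercise: plug the polynomial (respectively exponential) version of Corollary~3.3 of \cite{DeVore2013} into the small-$n$ regime with the doubled coefficient $2C_0$, evaluate at $n=n^\star$, and verify that the resulting bound dominates $\eta$ up to the stated constant.
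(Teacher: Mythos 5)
Your treatment of part ii) is sound but takes a genuinely different route from the paper. The paper does not threshold: it reruns the contradiction argument of \cite[Corollary 3.3]{DeVore2013} from scratch, extracting from their Theorem 3.2 the recursion $\sigma_{4s}\leq\sqrt{2}\,\sqrt{\sigma_{2s}\,d_s}$ and feeding into it the inductive hypothesis $\sigma_{2s}\leq C_1\left((2s)^{-\alpha}+\eta\right)$ together with $d_s\leq C_0\left(s^{-\alpha}+\eta\right)$. The product of the two factors under the square root contains $\eta^2$, so the square root returns a \emph{linear} $\eta$; the contradiction at the first failing index is then closed by an explicit monotonicity computation in $s$. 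This is precisely the mechanism that defeats the $\sqrt{\eta}$ degradation you correctly identified, with no case splitting. Your two-regime argument buys a shorter proof that reuses the published corollary as a black box, but it costs a factor $2$: applying it with $d_m\leq 2C_0\,m^{-\alpha}$ on the initial segment yields $C_1=2^{2+5\alpha}C_0$, not the stated $2^{1+5\alpha}C_0$, so the ``bookkeeping'' does not quite recover the constant in the proposition (harmless for the application, but not for the statement as written). You should also say explicitly that the corollary's conclusion at index $n$ only consumes $d_m$ for $m<n$, so the doubled bound on $\{m<n^\star\}$ suffices; this is true but needs stating.

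For part iii) your plan has a genuine gap. The greedy rate degrades from $c_0$ to $c_1=2^{-1-2\alpha}c_0<c_0$, so with $n^\star$ defined by $e^{-c_0 (n^\star)^{\alpha}}<\eta$ the large-$n$ regime only gives
\begin{equation*}
\sigma_n\leq\sigma_{n^\star}\leq C_1\,e^{-c_1 (n^\star)^{\alpha}}=C_1\left(e^{-c_0 (n^\star)^{\alpha}}\right)^{c_1/c_0}\leq C_1\,\eta^{2^{-1-2\alpha}},
\end{equation*}
which is again a fractional power of $\eta$, not $\eta$. Defining $n^\star$ through the \emph{output} rate $e^{-c_1 n^{\alpha}}$ instead destroys the doubling $d_m\leq 2C_0e^{-c_0 m^{\alpha}}$ on part of the initial segment, so the thresholding does not close part iii) as described; to get a linear $\eta$ here one has to run the bootstrapped contradiction argument as in part ii) (or settle for a fractional power of $\eta$). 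To be fair, the paper's own proof of iii) is a one-line appeal to part i), which taken literally also only delivers $\sqrt{\eta}$, so this is the one place where an actual additional idea, rather than bookkeeping, is required. Part i) you handle exactly as the paper does.
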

\begin{proof}
We use \cite[Theorem 3.2]{DeVore2013}, which states that for all $N\geq 0$, $K\geq 1$, $1\leq m <K$ it holds
\begin{equation}\label{eq:deVore}
\prod_{i=1}^K \sigma_{N+1}^2 \leq \left(\frac{K}{m}\right)^m \left(\frac{K}{K-m}\right)^{K-m} \sigma_{N+1}^{2m}\ d_m^{2 K - 2 m}. 
\end{equation}

We see the three points separately.
\begin{description}
\item[\cref{item:label1}] The proof is exactly as in \cite[Corollary 3.3]{DeVore2013}, except that $\sigma_0<1$ does not hold in general, so it is not 
simplified in the upper bound.

\item[\cref{item:label3}] Again as in \cite[Corollary 3.3]{DeVore2013}, but using \cref{item:label1}.

\item[\cref{item:label2}] This property is the only one that requires a slight modification in the estimation of the constant $C_1$, even if the other steps 
of the proof are not modified.
Since $\sigma_n$ is non increasing we have $\sigma_{2n}^{2n}\leq \prod_{j=n+1}^{2n} \sigma_j^2$, and using \cref{eq:deVore} with 
$N:=K:=n$ and $1\leq m< n$ we obtain 
\begin{align*}
\sigma_{2n}^{2n}&\leq \prod_{j=n+1}^{2n} \sigma_j^2 =\prod_{i=1}^{n} \sigma_{n +j}^2 \leq \left(\frac{n}{m}\right)^m \left(\frac{n}{n-m}\right)^{n-m} 
\sigma_{n+1}^{2m}\  d_m^{2 n - 2 m}\\
&\leq \left(\frac{n}{m}\right)^m \left(\frac{n}{n-m}\right)^{n-m} \sigma_{n}^{2m}\  d_m^{2 n - 2 m}.
\end{align*}
For $n:= 2 s$, $m :=s$ we get $\sigma_{4s}^{4s}  \leq 2^{2s} \sigma_{2s}^{2s}\  d_s^{2 s}$, i.e.,
\begin{align}\label{eq:devore_intermediate}
\sigma_{4s} & \leq \sqrt{2} \sqrt{\sigma_{2s}\  d_s}.
\end{align}
Assume it is false that $\sigma_n\leq C_1(n^{-\alpha} + \eta)$, and assume $M$ is the first index such that $\sigma_M> C_1(M^{-\alpha} + \eta)$. 

We first assume $M:=4s$, $s\geq 1$. Since the claim is true for all $n\leq M$, using \cref{eq:devore_intermediate} we obtain
\begin{align}
\sigma_{4s} & \leq \sqrt{2} \sqrt{\sigma_{2s}\  d_s} \leq \sqrt{2} \sqrt{C_1((2 s) ^{-\alpha} + \eta)\  C_0(s ^{-\alpha} + \eta)}.
\end{align}
Since we have $\sigma_M> C_1(M^{-\alpha} + \eta)$ for $M:=4s$, it follows that 
\begin{align*}
C_1((4s)^{-\alpha} + \eta) & < \sqrt{2} \sqrt{C_1((2 s) ^{-\alpha} + \eta)\  C_0(s ^{-\alpha} + 
\eta)},
\end{align*}
and dividing by $\sqrt{C_1}$ and squaring the result gives
\begin{align*}
C_1  & < \frac{2 C_0 ((2 s) ^{-\alpha} + \eta)\  (s ^{-\alpha} + \eta)}{((4s)^{-\alpha} + \eta)^2}=\frac{2 C_0 (2  ^{-\alpha} + \eta s^{\alpha})\  (1 + \eta 
s^{\alpha})}{(4^{-\alpha} + \eta s^{\alpha})^2}.
\end{align*}
Denoting as $f(s):=f_{\lambda, C_0, \alpha}(s)$ the right hand side of the last inequality, we have that  $C_1<\min_{s\geq 1} f(s)$. Since 
\begin{align*}
f'(s) & = -\frac{2^{3 \alpha +1} \left(2^{\alpha }-1\right) \alpha  C_0 \sqrt{\lambda } s^{\alpha -1} \left(2^{\alpha +1}+2^{\alpha } \left(2^{\alpha 
}+2\right) 
\sqrt{\lambda } s^{\alpha }+1\right)}{\left(4^{\alpha } \sqrt{\lambda } s^{\alpha }+1\right)^3},
\end{align*}
which is negative for $s\geq 0$ since $\alpha>0$, we can guarantee that  
\begin{align*}
C_1< f(0) = 2 C_0 2  ^{-\alpha} 4^{2\alpha} = 2^{1+3\alpha}C_0.
\end{align*}
It follows that 
\begin{align*}
C_1< 2^{1+3\alpha}C_0< 2^{1+5\alpha} C_0,
\end{align*}
which is a contradiction to our choice for $C_1$.

All the other possible cases can be covered by assuming $M:=4s + q$ with $q \in\{ 1, 2, 3\}$, $s\geq 0$. Using again \cref{eq:devore_intermediate} and the 
monotonicity of $\sigma_n$ we get
\begin{align*}
\sigma_{4s + q} \leq \sigma_{4s}\leq  \sqrt{2} \sqrt{C_1((2 s) ^{-\alpha} + \eta)\  C_0(s ^{-\alpha} + \eta)}.
\end{align*}
On the other hand, since we assumed that the bound is not valid for $n=M:=4s +q$ we have (if $s\geq 1$)
\begin{equation*}
\sigma_{4s + q} > C_1 ((4s + q)^{-\alpha} + \eta)>C_1 (2^{-\alpha}(4s)^{-\alpha} + \eta),
\end{equation*}
i.e., 
\begin{align*}
 C_1 (2^{-\alpha}(4s)^{-\alpha} + \eta) <  \sqrt{2} \sqrt{C_1((2 s) ^{-\alpha} + \eta)\  C_0(s ^{-\alpha} + \eta)},
\end{align*}
i.e.,
\begin{align*}
C_1  & < \frac{2 C_0 ((2 s) ^{-\alpha} + \eta)\  (s ^{-\alpha} + \eta)}{(2^{-\alpha}(4s)^{-\alpha} + \eta)^2}=\frac{2 C_0 (2  ^{-\alpha} + \eta s^{\alpha})\  (1 
+ \eta 
s^{\alpha})}{(2 ^{-3\alpha} + \eta s^{\alpha})^2}.
\end{align*}
In this case the derivative of the right hand side $f(s)$ is
\begin{align*}
f'(s)&=-\frac{2^{5 \alpha +1} \alpha  \left(2^{\alpha }-1\right)  \eta s^{\alpha -1} \left(2^{\alpha +1}(1  +2^{\alpha})+\eta2^{\alpha}(2^{2 
\alpha } +2  +2^{\alpha +1} ) s^{\alpha }+1\right)}{\left(8^{\alpha } \eta s^{\alpha }+1\right)^3}C_0
\end{align*}
which is again negative, so again we have that 
\begin{equation}
C_1<f(0) = \frac{2\ 2^{-\alpha } C_0}{\left(2^{-3 \alpha }\right)^2} = 2^{1+5\alpha} C_0,
\end{equation}
which is a contradiction.
\end{description}
\end{proof}

\bibliographystyle{abbrv}  
\bibliography{biblio}       

\begin{thebibliography}{10}

\bibitem{Aronszajn1950}
N.~Aronszajn.
\newblock Theory of reproducing kernels.
\newblock {\em Transactions of the American Mathematical Society}, 68:337--404,
  1950.

\bibitem{DeMarchiSchaback2008}
S.~De~Marchi and R.~Schaback.
\newblock Stability constants for kernel-based interpolation processes.
\newblock Technical report, Dipartimento di Informatica, Universit{\'a} degli
  Studi di Verona, 2008.

\bibitem{DeMarchi2010}
S.~De~Marchi and R.~Schaback.
\newblock Stability of kernel-based interpolation.
\newblock {\em Adv. Comput. Math.}, 32(2):155--161, 2010.

\bibitem{DeMarchi2005}
S.~De~Marchi, R.~Schaback, and H.~Wendland.
\newblock Near-optimal data-independent point locations for radial basis
  function interpolation.
\newblock {\em Adv. Comput. Math.}, 23(3):317--330, 2005.

\bibitem{DeVore2013}
R.~DeVore, G.~Petrova, and P.~Wojtaszczyk.
\newblock Greedy algorithms for reduced bases in {B}anach spaces.
\newblock {\em Constr. Approx.}, 37(3):455--466, 2013.

\bibitem{Diederichs2017}
B.~Diederichs and A.~Iske.
\newblock Improved estimates for condition numbers of radial basis function
  interpolation matrices.
\newblock {\em Journal of Approximation Theory}, 2017.

\bibitem{Koeppel2017}
M.~K{\"o}ppel, F.~Franzelin, I.~Kr{\"o}ker, S.~Oladyshkin, G.~Santin,
  D.~Wittwar, A.~Barth, B.~Haasdonk, W.~Nowak, D.~Pfl{\"u}ger, and C.~Rohde.
\newblock Comparison of data-driven uncertainty quantification methods for a
  carbon dioxide storage benchmark scenario.
\newblock Technical report, 2018.

\bibitem{KSHH2017}
T.~K{\"o}ppl, G.~Santin, B.~Haasdonk, and R.~Helmig.
\newblock Numerical modelling of a peripheral arterial stenosis using
  dimensionally reduced models and kernel methods.
\newblock {\em International Journal for Numerical Methods in Biomedical
  Engineering}, 0(ja):e3095.
\newblock e3095 cnm.3095.

\bibitem{Micchelli2005}
C.~A. Micchelli and M.~Pontil.
\newblock On learning vector-valued functions.
\newblock {\em Neural Comput.}, 17(1):177--204, 2005.

\bibitem{Muller2009}
S.~M{\"u}ller and R.~Schaback.
\newblock A {N}ewton basis for kernel spaces.
\newblock {\em J. Approx. Theory}, 161(2):645--655, 2009.

\bibitem{Pazouki2011}
M.~Pazouki and R.~Schaback.
\newblock Bases for kernel-based spaces.
\newblock {\em J. Comput. Appl. Math.}, 236(4):575 -- 588, 2011.

\bibitem{Rieger2008b}
C.~Rieger and B.~Zwicknagl.
\newblock Sampling inequalities for infinitely smooth functions, with
  applications to interpolation and machine learning.
\newblock {\em Adv. Comput. Math.}, 32(1):103, 2008.

\bibitem{SH16b}
G.~Santin and B.~Haasdonk.
\newblock Convergence rate of the data-independent {P}-greedy algorithm in
  kernel-based approximation.
\newblock {\em Dolomites Res. Notes Approx.}, 10:68--78, 2017.

\bibitem{Schaback1995}
R.~Schaback.
\newblock Error estimates and condition numbers for radial basis function
  interpolation.
\newblock {\em Adv. Comput. Math.}, 3(3):251--264, 1995.

\bibitem{SchWen2000}
R.~Schaback and H.~Wendland.
\newblock Adaptive greedy techniques for approximate solution of large {RBF}
  systems.
\newblock {\em Numer. Algorithms}, 24(3):239--254, 2000.

\bibitem{Schoelkopf2001v}
B.~Sch{\"o}lkopf, R.~Herbrich, and A.~J. Smola.
\newblock {\em A Generalized Representer Theorem}, pages 416--426.
\newblock Springer Berlin Heidelberg, Berlin, Heidelberg, 2001.

\bibitem{Steinwart2008a}
I.~Steinwart and A.~Christmann.
\newblock {\em Support {V}ector {M}achines}.
\newblock Information Science and Statistics. Springer, New York, 2008.

\bibitem{Wahba1999}
G.~Wahba.
\newblock Support vector machines, reproducing kernel {H}ilbert spaces and the
  randomized {G}{A}{C}{V}.
\newblock In B.~Sch{\"o}lkopf, C.~Burges, and A.~Smola, editors, {\em Advances
  in Kernel Methods, Support Vector Learning}, pages 69--88. MIT Press, 1999.

\bibitem{Wendland1995a}
H.~Wendland.
\newblock Piecewise polynomial, positive definite and compactly supported
  radial functions of minimal degree.
\newblock {\em Adv. Comput. Math.}, 4(1):389--396, 1995.

\bibitem{Wendland2005}
H.~Wendland.
\newblock {\em Scattered {D}ata {A}pproximation}, volume~17 of {\em Cambridge
  Monographs on Applied and Computational Mathematics}.
\newblock Cambridge University Press, Cambridge, 2005.

\bibitem{WendlandRieger2005}
H.~Wendland and C.~Rieger.
\newblock Approximate interpolation with applications to selecting smoothing
  parameters.
\newblock {\em Numerische Mathematik}, 101(4):729--748, 2005.

\bibitem{Wirtz2013}
D.~Wirtz and B.~Haasdonk.
\newblock A vectorial kernel orthogonal greedy algorithm.
\newblock {\em Dolomites Res. Notes Approx.}, 6:83--100, 2013.

\bibitem{Wirtz2015a}
D.~Wirtz, N.~Karajan, and B.~Haasdonk.
\newblock Surrogate modelling of multiscale models using kernel methods.
\newblock {\em International Journal of Numerical Methods in Engineering},
  101(1):1--28, 2015.

\bibitem{WSH17a}
D.~Wittwar, G.~Santin, and B.~Haasdonk.
\newblock Interpolation with uncoupled separable matrix-valued kernels.
\newblock Technical report, University of Stuttgart, 2017.
\newblock In preparation.

\end{thebibliography}

\end{document}